\newcommand{\ignore}[1]{}
\newtheorem{theorem}{Theorem}
\newtheorem{lemma}[theorem]{Lemma}
\newtheorem{proposition}[theorem]{Proposition}
\theoremstyle{definition}
\newtheorem{definition}[theorem]{Definition}
\newtheorem{remark}[theorem]{Remark}
\newtheorem{assumption}{Assumption}
\newcounter{step}
\newenvironment{step}[1][Step \thestep]{\refstepcounter{step}  \begin{trivlist}
\item[\hskip \labelsep {\bfseries #1}]}{\end{trivlist}}
\numberwithin{equation}{section}
\numberwithin{theorem}{section}
\newcommand{\m}{\mathbb}
\author{Bruno Ziliotto  \thanks{TSE (GREMAQ, Universit\'{e} Toulouse 1 Capitole), 21 all\'{e}e de Brienne, 31000 Toulouse,
France. \newline E-mail: ziliotto@math.cnrs.fr}} 
\title{A Tauberian theorem for nonexpansive operators and applications to zero-sum stochastic games}
\begin{document}
\maketitle
\bibliographystyle{plain}
\begin{abstract}
We prove a Tauberian theorem for nonexpansive operators, and apply it to the model of zero-sum stochastic game. Under mild assumptions, we prove that the value of the $\lambda$-discounted game $v_{\lambda}$ converges uniformly when $\lambda$ goes to 0 if and only if the value of the $n$-stage game $v_n$ converges uniformly when $n$ goes to infinity. This generalizes the Tauberian theorem of Lehrer and Sorin \cite{LS92} to the two-player zero-sum case. We also provide the first example of a stochastic game with public signals on the state and perfect observation of actions, with finite state space, signal sets and action sets,  in which for some initial state $k_1$ known by both players, $(v_{\lambda}(k_1))$ and $(v_n(k_1))$ converge to distinct limits.
\end{abstract}
\section*{Introduction}
Zero-sum stochastic games were introduced by Shapley \cite{SH53}. In this model, two players repeatedly play a zero-sum game, which depends on the state of nature. At each stage, a new state of nature is drawn from a distribution based on the actions of players and the state of the previous stage. The state of nature is announced to both players, along with the actions of the previous stage. 
There are several ways to evaluate the payoff in a stochastic game. For $n \in \m{N}^*$, the payoff in the $n-stage \ game$ is the Cesaro mean $n^{-1} \sum_{m=1}^n g_m$, where $g_m$ is the payoff at stage $m \geq 1$. For $\lambda \in (0,1]$, the payoff in the $\lambda-discounted \ game$ is the Abel mean $\sum_{m \geq 1} \lambda(1-\lambda)^{m-1} g_m$. Under mild assumptions, the $n$-stage game and the $\lambda$-discounted game have a value, denoted respectively by $v_n$ and $v_{\lambda}$ (see Maitra and Parthasarathy \cite{MP70} and Nowak \cite{N85}).

A huge part of the literature focuses on the existence of the limit of $v_{n}$ when $n$ goes to infinity, and of the limit of $v_{\lambda}$ when $\lambda$ goes to 0. Bewley and Kohlberg \cite{BK76} proved that $(v_n)$ and $(v_{\lambda})$ converge to the same limit, when the state space and action sets are finite. For Markov Decision Processes, this result extends to the case of compact state space, infinite action set and 1-Lipschitz transition (see Rosenberg, Solan and Vieille \cite{RSV02} and Renault \cite{R11}). For absorbing games, this result extends to the case of infinite state space, compact action sets and continuous payoff and transition functions (see Mertens, Neyman and Rosenberg \cite{MNR09}). Vigeral \cite{vigeral13} provided an example of a stochastic game with finite state space and compact action sets in which neither $(v_n)$ nor $(v_{\lambda})$ converges. A natural question is whether the convergence of $(v_{n})$ implies the convergence of $(v_{\lambda})$, and conversely. When $(v_{\lambda})$ is absolutely continuous with respect to $\lambda$, Neyman \cite[Appendix C, p.177]{sorin02b} proved that $(v_n)$ converges to the limit of $(v_{\lambda})$. In the dynamic programming framework, Lehrer and Sorin \cite{LS92} proved that $(v_n)$ converges uniformly (with respect to the initial state) if and only if $(v_{\lambda})$ converges uniformly, and that when uniform convergence holds, the two limits coincide \footnote{For a proof of this result in a continuous-time model, see Oliu Barton and Vigeral \cite{BV13}. Still in continuous time, in a very recent independent paper, Khlopin \cite{K14} has generalized this result to the two-player case.}. This result does not hold when uniform convergence is replaced by pointwise convergence (see Sorin \cite[Chapter 1, p. 9-10]{sorin02b}). In the two-player case, Li and Venel \cite{LV14} proved that for recursive games (which are stochastic games where the payoff is $0$ in nonabsorbing states), $(v_n)$ converges uniformly if and only if $(v_{\lambda})$ converges uniformly, and that when uniform convergence holds the two limits are equal. The generalization of this result to stochastic games was open.

Mertens, Sorin and Zamir \cite[Chapter IV]{MSZ} have introduced a general model of stochastic game with signals, in which players neither observe the state nor the action of their opponent, but instead observe at every stage a signal correlated to the current state and the actions which have just been played (state space, action and signal sets are assumed to be finite). Ziliotto \cite{Z13} provided an example of a stochastic game with public signals on the state and perfect observation of actions, such that $(v_n)$ and $(v_{\lambda})$ fail to converge (for special classes of stochastic games with signals in which $(v_n)$ and $(v_{\lambda})$ converge to the same limit, see \cite{GOV13,N08,R06,R12,rosenberg00,RSV02,RSV04,V10}). The question of the relation between the convergence of $(v_n)$ and $(v_{\lambda})$ was also open. By Mertens, Sorin and Zamir \cite[Chapter III]{MSZ}, one can associate to any stochastic game with signals an auxiliary stochastic game with perfect observation of the state and actions, which has the same $n$-stage and $\lambda$-discounted values. The state space of this auxiliary game is infinite and compact metric, and is the set of infinite higher-order beliefs of players about the state. That is why in this paper we study first stochastic games, then apply our results to stochastic games with signals.

The contribution of this paper is twofold. First, it generalizes both the result of Lehrer and Sorin \cite{LS92} and Li and Venel \cite{LV14} to stochastic games. We consider any stochastic game (with possibly infinite set space and action sets) in which for all $n \in \m{N}^*$ and $\lambda \in (0,1]$, $(v_{n})$ and $(v_{\lambda})$ exist and satisfy Shapley equations, and prove the following Tauberian theorem: $(v_n)$ converges uniformly if and only if $(v_{\lambda})$ converges uniformly, and when uniform convergence holds the two limits are equal. This theorem applies to many standard models in the literature: dynamic programming, stochastic games with finite state space and compact action sets, stochastic games with signals, hidden stochastic games, and Markov chain games. The proof of our result relies on the operator approach, introduced by Rosenberg and Sorin \cite{RS01}. This approach relies on the fact that the values of the $n$-stage game and the $\lambda$-discounted game satisfy a functional equation, called the Shapley equation (see Shapley \cite{SH53}). The properties of the associated nonexpansive operator can be exploited to infer convergence properties of $(v_n)$ and $(v_{\lambda})$ (see Rosenberg and Sorin \cite{RS01}). Thus, we start by proving a Tauberian theorem for nonexpansive operators, and then apply it to stochastic games.

Second, this paper provides the first example of a stochastic game with public signals on the state and perfect observation of the actions (hidden stochastic game), with finite state space, signal sets and action sets,  in which for some initial state $k_1$ known by both players, $(v_{\lambda}(k_1))$ and $(v_n(k_1))$ converge to distinct limits (note that in the example in Sorin \cite[Chapter 1, p. 9-10]{sorin02b}, the state space is infinite and not compact). An example of a stochastic game with finite state space, compact action sets, perfect observation of the state and actions and having the same property can be deduced from this example. Thus, our example shows that as soon as the state is imperfectly observed, or the state space is not finite, or the action sets are not finite, there is no link between the convergence of $(v_{\lambda}(k_1))$ and $(v_n(k_1))$, where $k_1$ is some initial state.

The paper is organized as follows. In the first section, a Tauberian theorem for nonexpansive operators is stated and proved. In the second section, a Tauberian theorem for stochastic games is deduced from the first section. In the third section, particular cases of stochastic games are considered. The fourth section presents the aforementioned example.

\section{Nonexpansive operators}
Let $(X, \left\| . \right\|$) be a Banach space, and $\Psi:X \rightarrow X$ be a nonexpansive mapping, that is:
\begin{equation*}
\forall \ (f,g) \in X^2 \quad \left\|\Psi(f)-\Psi(g) \right\| \leq \left\|f-g\right\|.
\end{equation*}
By a standard fixed point argument (see Sorin \cite[Appendix C]{sorin02b}), there exists a bounded family $(v_{\lambda})_{\lambda \in (0,1]}$ such that for all $\lambda \in (0,1]$
\begin{equation} \label{recop}
v_{\lambda}=\lambda \Psi((1-\lambda) \lambda^{-1} v_{\lambda}).
\end{equation}

For $n \in \m{N}^*$, define
\begin{equation} \label{recopn}
v_n:=n^{-1} \Psi^n(0),
\end{equation}
where $\Psi^n$ is the $n$-th iterate of $\Psi$.
Because $\Psi$ is nonexpansive, $(v_n)_{n \geq 1}$ is bounded.

Kohlberg and Neyman \cite{KN81} provided conditions under which $\lim_{n \rightarrow +\infty} v_n $ and $\lim_{\lambda \rightarrow 0} v_{\lambda}$ exist. In this section, we investigate the link between the existence of $\lim_{n \rightarrow +\infty} v_n$ and
$\lim_{\lambda \rightarrow 0} v_{\lambda}$.
We make the following assumption:
\begin{assumption} \label{assop}
There exists $C>0$ such that for all $\lambda, \lambda' \in (0,1]$, $f \in X$,
\begin{equation*}
\left\| \lambda \Psi(\lambda^{-1} f)-\lambda' \Psi(\lambda'^{-1} f) \right\| \leq C \left|\lambda-\lambda' \right|.
\end{equation*}
\end{assumption}
\begin{remark} \label{example}
An important class of operators which satisfy Assumption 1 is the following. Let $K$ be any set, and $X$ be the set of bounded real-valued functions defined on $K$, equipped with the uniform norm. Consider two sets $S$ and $T$, and a family of linear forms $(P_{k,s,t})_{(k,s,t) \in K \times S\times T}$ on $X$, such that for all $(k,s,t)$, $P_{k,s,t}$ is of norm smaller than 1. Let $g:K \times S \times T \rightarrow \m{R}$ be a bounded function. Define $\Psi :X \rightarrow X$ by $\Psi(f)(k):=\sup_{s \in S} \inf_{t \in T} \left\{g(k,s,t)+P_{k,s,t}(f) \right\}$, for all $f \in X$ and $k \in K$. This class includes \textit{Shapley operators} (see Neyman \cite[p.397-415]{NS03b}): this corresponds to the case where $K$ is the state space of some zero-sum stochastic game, $S$ (resp. $T$) is the set of mixed actions of Player 1 (resp. 2), $k$ is the current state, and $P_{k,s,t}(f)$ is the expectation of $f(k')$ under mixed actions $s$ and $t$, where $k'$ is the state at next stage. 
Under suitable assumptions, for all $n \in \m{N}^*$ and $\lambda \in (0,1]$, $v_n$ and $v_{\lambda}$ are respectively the value of the $n$-stage game and the value of the $\lambda$-discounted game. This point will be useful in Sections \ref{app} and \ref{ex}.
\end{remark}
We now state a Tauberian theorem for nonexpansive operators satisfying Assumption 1.
\begin{theorem} \label{taubgen}
Under Assumption 1, the two following statements are equivalent:
\begin{enumerate}[(a)]
\item The sequence $(v_n)_{n \geq 1}$ converges when $n$ goes to infinity.
\item The mapping $\lambda \rightarrow v_{\lambda}$ has a limit when $\lambda$ goes to 0.
\end{enumerate}
Moreover, when these statements hold, we have $\lim_{n \rightarrow+\infty} v_n=\lim_{\lambda \rightarrow 0} v_{\lambda}$.
\end{theorem}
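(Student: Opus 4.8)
The plan is to recast both families as values of ``general evaluations'' built from a single family of one-step operators, and then to transfer the classical Abel--Cesàro comparison through these operators. For $\lambda \in (0,1]$ set $\Phi_{\lambda}(f) := \lambda \Psi(\lambda^{-1} f)$. Each $\Phi_{\lambda}$ is nonexpansive (the factors $\lambda$ and $\lambda^{-1}$ cancel), and Assumption \ref{assop} says precisely that $\lambda \mapsto \Phi_{\lambda}(f)$ is $C$-Lipschitz, uniformly in $f$. A straightforward induction gives $\Phi_{\lambda}^{m}(0) = \lambda \Psi^{m}(0)$, so that by \eqref{recopn} $v_n = \Phi_{1/n}^{\,n}(0)$; and unfolding \eqref{recop} identifies $v_{\lambda}$ with the value of the geometric evaluation $\theta^{\lambda} = (\lambda(1-\lambda)^{m-1})_{m \ge 1}$. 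More generally, to a summable weight vector $\theta = (\theta_m)_{m \ge 1}$ one associates a value $V_{\theta}$ by composing the operators $\Phi_{\theta_m}$ starting from $0$; this assignment is positively homogeneous, $V_{c\theta} = c\,V_{\theta}$, and satisfies $v_n = V_{\theta^{n}}$ for the uniform evaluation $\theta^{n} = (n^{-1},\dots,n^{-1},0,\dots)$ and $v_{\lambda} = V_{\theta^{\lambda}}$.

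First I would establish the structural consequence of Assumption \ref{assop}: for any two weight vectors,
\begin{equation*}
\left\| V_{\theta} - V_{\theta'} \right\| \le C \sum_{m \ge 1} \left| \theta_m - \theta'_m \right|.
\end{equation*}
This follows by telescoping: interpolate between the two compositions by replacing one operator $\Phi_{\theta_m}$ by $\Phi_{\theta'_m}$ at a time, bound each replacement using nonexpansiveness of all the unchanged factors and Assumption \ref{assop} for the changed one, and sum. In particular $\lambda \mapsto v_{\lambda}$ is continuous on $(0,1]$. I would also record the a priori regularity playing the role of the Tauberian condition: since $\Psi$ is nonexpansive, $\|\Psi^{m+1}(0) - \Psi^{m}(0)\| \le \|\Psi(0)\|$, whence $\|\Psi^{m}(0)\| \le m\|\Psi(0)\|$ and $\|v_{n+1} - v_n\| \le 2\|\Psi(0)\|/(n+1) = O(1/n)$; the analogous control of the variation of $\lambda \mapsto v_{\lambda}$ follows from the displayed inequality.

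To transfer the scalar theorem I would use the exact identity between weight vectors $\theta^{\lambda} = \sum_{k \ge 1} \mu_{\lambda}(k)\,\theta^{k}$, where $\mu_{\lambda}(k) = \lambda^{2} k (1-\lambda)^{k-1}$ is a probability vector concentrating on $k \sim 1/\lambda$ as $\lambda \to 0$. For the Abelian direction (a)$\Rightarrow$(b), if $v_n \to v^{\ast}$ then the average $\sum_k \mu_{\lambda}(k)\,v_k \to v^{\ast}$ by concentration, and it remains to compare $v_{\lambda} = V_{\theta^{\lambda}}$ with this average; for the Tauberian direction (b)$\Rightarrow$(a) one runs the comparison in the opposite direction, using the slow-oscillation estimate to invert the averaging. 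In both cases the equality of the two limits is forced by the concentration of $\mu_{\lambda}$.

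The main obstacle is exactly this last transfer. The map $\theta \mapsto V_{\theta}$ is only positively homogeneous and $\ell^1$-Lipschitz, not linear, so the averaging identity cannot be applied term by term; moreover the $\ell^1$-Lipschitz bound is by itself too weak, since the uniform and geometric evaluations stay at $\ell^1$-distance bounded away from $0$, so $v_{\lambda}$ and $v_n$ (with $\lambda \sim 1/n$) cannot simply be compared. The crux is therefore to reconstruct the averaging with a vanishing error by exploiting the recursive concatenation structure of $V_{\theta}$ together with the $O(1/n)$ regularity: one splits the horizon into consecutive blocks on which the geometric weights are nearly constant, so that the value over each block is nearly a uniform (Cesàro) value $v_k$, and one controls the accumulated discrepancy across blocks by the slow-oscillation estimates. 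Making this block decomposition quantitative, so that the error tends to $0$ uniformly as $\lambda \to 0$ (respectively $n \to \infty$), is the technical heart of the argument.
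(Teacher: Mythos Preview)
Your framework is sound: the operators $\Phi_\lambda$, the positive homogeneity $V_{c\theta}=cV_\theta$, the $\ell^1$-Lipschitz estimate, and the identity $\theta^\lambda=\sum_k \mu_\lambda(k)\theta^k$ are all correct. You also correctly diagnose the obstruction --- $V$ is not linear, and the $\ell^1$-distance between $\theta^\lambda$ and any $\theta^k$ stays bounded away from $0$, so neither the averaging identity nor the Lipschitz bound yields the comparison directly.

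The gap is that you stop exactly where the proof begins. You write that ``making this block decomposition quantitative \dots\ is the technical heart of the argument'' and then do not perform it. Everything before that sentence is scaffolding; the theorem is not established until the block estimates are carried out and shown to vanish. In particular, your Abelian paragraph says ``it remains to compare $v_\lambda=V_{\theta^\lambda}$ with this average''; but that comparison is precisely the whole difficulty, and nothing you have proved controls $\|V_{\theta^\lambda}-\sum_k\mu_\lambda(k)V_{\theta^k}\|$.

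The paper's proof is exactly a concrete execution of the block scheme you allude to, working directly with iterated operators rather than the $V_\theta$ language. The key lemma compares $\Psi^t_{1/n}(f)$ (the $t$-fold discounted iterate at rate $1/n$) with $n^{-1}\Psi^t((n-t)f)$, with error $(C+\|f\|)[t/n-1+(1-1/n)^t]$. For $(a)\Rightarrow(b)$ one takes $t=\lfloor\epsilon n\rfloor$, uses the $(1-\lambda)^t$ contraction to absorb the residual $\|v_\lambda-v_n\|$ on the right, and checks that the accumulated error is $O(\epsilon)$. For $(b)\Rightarrow(a)$ one sets $n_r=\lfloor(1-\epsilon)^{r-1}n\rfloor$, telescopes $\|v_{n_r}-v_{\lambda_r}\|$ over $r_0\approx\epsilon^{-3/2}$ blocks, and balances the geometric decay $(1-\epsilon)^{r_0}$ against the per-block error $O(\epsilon^2)$. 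These choices --- the block length, the number of blocks, and the resulting balance of errors --- are the content of the proof and are absent from your proposal.

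A minor point: the $\mu_\lambda$ identity, while true, plays no role in the actual argument; the recursive concatenation (your last paragraph) bypasses it entirely. You would lose nothing by dropping it and going straight to the iterated-operator estimates.
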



The remainder of this section is dedicated to the proof of the theorem. 
\begin{definition}
Let $\lambda \in (0,1]$ and $n \in \m{N}^*$. The operator $\Psi^{n}_{\lambda}: X \rightarrow X$ is defined recursively by 
$\Psi^{0}_{\lambda}(f):=f$ for all $f \in X$, and for $n \geq 1$:
\begin{equation*}
\forall \ f \in X \quad \Psi^{n}_{\lambda}(f):=\lambda \Psi((1-\lambda) \lambda^{-1} \Psi^{n-1}_{\lambda}(f)).
\end{equation*}
Note that equation (\ref{recop}) writes
\begin{equation*}
v_{\lambda}=\Psi^1_{\lambda}(v_{\lambda}).
\end{equation*}
\end{definition}
\begin{lemma} \label{iterate} 
Let $f,g \in X$,  $\lambda \in (0,1]$, $n \in \m{N}^*$ and $t \in \left\{1,2,...,n\right\}$. Then
\begin{enumerate}[(i)]
\item \label{it1}
\begin{equation*}
\left\|\Psi^{t}_{\lambda}(f)-\Psi^{t}_{\lambda}(g)\right\| \leq (1-\lambda)^{t} \left\|f-g \right\|
\end{equation*}
\item \label{it2}
\begin{equation*}
\left\|\Psi^{t}_{n^{-1}}(f)-n^{-1}\Psi^{t}((n-t) f)\right\| \leq (C+\left\|f \right\|) \left[t n^{-1}-1+(1-n^{-1})^{t} \right].
\end{equation*}
\end{enumerate}
\end{lemma}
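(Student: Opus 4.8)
The plan is to prove both estimates by induction, treating (i) as a warm-up and reserving the real work for (ii). For (i), I would induct on $t$: the case $t=1$ is immediate from the definition $\Psi^1_\lambda(f)=\lambda\Psi((1-\lambda)\lambda^{-1}f)$ together with nonexpansiveness of $\Psi$, which produces a contraction factor $\lambda\cdot(1-\lambda)\lambda^{-1}=1-\lambda$; and the inductive step applies the same one-step contraction to $\Psi^{t-1}_\lambda(f)$ and $\Psi^{t-1}_\lambda(g)$ in place of $f,g$, multiplying the bound by another factor $(1-\lambda)$.

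For (ii), write $\lambda=n^{-1}$, set $w_t:=\Psi^t_{n^{-1}}(f)$ and $u_t:=n^{-1}\Psi^t((n-t)f)$, and note that both the claimed bound and $w_t-u_t$ vanish at $t=0$. I would run an outer induction on $t$ after inserting the intermediate point $n^{-1}\Psi((n-1)u_{t-1})$, so that
\[
w_t-u_t=\big[w_t-n^{-1}\Psi((n-1)u_{t-1})\big]+\big[n^{-1}\Psi((n-1)u_{t-1})-u_t\big]=:D_1+D_2.
\]
Since $w_t=n^{-1}\Psi((n-1)w_{t-1})$, nonexpansiveness immediately gives $\|D_1\|\le(1-n^{-1})\|w_{t-1}-u_{t-1}\|$, which is the contraction carrying the previous error. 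The whole difficulty is concentrated in $D_2$, which measures the discrepancy between the genuinely discounted iteration and the rescaled undiscounted one.

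The main obstacle is that the naive bound $\|D_2\|\le n^{-1}\|(1-n^{-1})\Psi^{t-1}((n-t+1)f)-\Psi^{t-1}((n-t)f)\|$, obtained from nonexpansiveness, cannot be closed by nonexpansiveness alone: the offending contribution $n^{-1}\Psi^{t-1}(\text{large argument})$ is of order $\|f\|$, not of the required order $n^{-1}$. This is exactly where Assumption~\ref{assop} must enter. Applying it with $\lambda=1-n^{-1}$, $\lambda'=1$ and argument $(1-n^{-1})Y$ yields the key commutation estimate
\[
\big\|(1-n^{-1})\Psi(Y)-\Psi((1-n^{-1})Y)\big\|\le Cn^{-1}\qquad(Y\in X),
\]
which lets one pull the scalar $(1-n^{-1})$ inside $\Psi$ at a cost of only $Cn^{-1}$ per step.

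Equipped with this, I would treat the inner quantity by a second induction. Setting $R_{m,p}:=(1-n^{-1})\Psi^m((p+1)f)-\Psi^m(pf)$, one combines the commutation estimate (with $Y=\Psi^{m-1}((p+1)f)$) and nonexpansiveness of $\Psi$ to obtain, for \emph{fixed} $p$, the recursion $\|R_{m,p}\|\le\|R_{m-1,p}\|+Cn^{-1}$; since $\|R_{0,p}\|=|1-(p+1)n^{-1}|\,\|f\|$, taking $p=n-t$ and $m=t-1$ gives $\|R_{t-1,n-t}\|\le(C+\|f\|)(t-1)n^{-1}$, hence $\|D_2\|\le(C+\|f\|)(t-1)n^{-2}$. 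Plugging the bounds on $\|D_1\|$ and $\|D_2\|$ into the outer induction leaves only the bookkeeping check that the recursion $B_t=(1-n^{-1})B_{t-1}+(C+\|f\|)(t-1)n^{-2}$ with $B_0=0$ is solved by $B_t=(C+\|f\|)[tn^{-1}-1+(1-n^{-1})^t]$, which I would verify by direct substitution. The delicate points to get right are the exact argument shifts ($(n-t+1)f$ versus $(n-t)f$) and keeping $p$ frozen across the inner induction, so that the per-step errors telescope to precisely the claimed expression.
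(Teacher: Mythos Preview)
Your argument is correct, but the route you take for (ii) is genuinely different from the paper's. The paper does \emph{not} run an induction on $t$; instead it unwraps both $\Psi^{t}_{n^{-1}}(f)$ and $n^{-1}\Psi^{t}((n-t)f)$ in a single chain of inequalities, keeping the argument $(n-t)f$ of the undiscounted iterate \emph{fixed} throughout and letting a prefactor $(1-n^{-1})^{m}$ accumulate in front of $\Psi^{t-m}_{n^{-1}}(f)$. At the $m$-th step Assumption~\ref{assop} is invoked with the varying pair $(\lambda,\lambda')=\big((1-n^{-1})^{m-1}n^{-1},\,n^{-1}\big)$, incurring the cost $C\big[n^{-1}-(1-n^{-1})^{m-1}n^{-1}\big]$; summing these costs over $m=1,\dots,t$ and adding the terminal error $\big|(1-n^{-1})^{t}-(1-tn^{-1})\big|\,\|f\|$ yields the closed form directly, without ever solving a recursion. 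In contrast, your outer induction shifts the argument of the undiscounted iterate (from $(n-t)f$ to $(n-t+1)f$) at every step, which is exactly what forces the inner induction on $R_{m,p}$; there you use Assumption~\ref{assop} only in the fixed form $(\lambda,\lambda')=(1-n^{-1},1)$. Both approaches are rigorous and land on the same bound; the paper's telescoping chain is shorter, while your decomposition makes the contraction-plus-source structure $B_{t}=(1-n^{-1})B_{t-1}+(C+\|f\|)(t-1)n^{-2}$ of the error explicit.
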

\proof{Proof}
\begin{enumerate}[(i)] 
\item This follows from the nonexpansiveness of $\Psi$.
\\
\item
We have
\begin{eqnarray*}
\left\|\Psi^{t}_{n^{-1}}(f)-n^{-1}\Psi^{t}((n-t)f)\right\|&\leq& 
\left\|(1-n^{-1}) \Psi_{n^{-1}}^{t-1}(f)-n^{-1} \Psi^{t-1}((n-t)f) \right\|
\\
&\leq&
C \left[n^{-1}-(1-n^{-1})n^{-1}\right]+\left\|(1-n^{-1})^2 \Psi_{n^{-1}}^{t-2}(f)-n^{-1} \Psi^{t-2}((n-t)f) \right\|
\\
&\leq& C \sum_{m=1}^t (n^{-1}-(1-n^{-1})^{m-1} n^{-1})+
\left\|(1-n^{-1})^t f-n^{-1}(n-t)f \right\|
\\
&=& C \left[t n^{-1}-1+(1-n^{-1})^t \right]+\left[(1-n^{-1})^t-n^{-1}(n-t)\right] \left\|f \right\|
\\
&=& (C+\left\|f \right\|) \left[t n^{-1}-1+(1-n^{-1})^{t} \right].
\end{eqnarray*}
The first inequality stems from the nonexpansiveness of $\Psi$. In the second inequality, we applied Assumption 1 for $\lambda=(1-n^{-1})n^{-1}$, $\lambda'=n^{-1}$ and $\widetilde{f}=(1-n^{-1})^2 \Psi^{t-2}_{n^{-1}}(f)$, and used the nonexpansiveness of $\Psi$. Applying successively Assumption 1 for $\lambda=(1-n^{-1})^{m-1} n^{-1}$, $\lambda'=n^{-1}$ and $\widetilde{f}=(1-n^{-1})^{m} \Psi^{t-m}_{n^{-1}}(f)$ ($m \in \left\{1,...,t \right\}$) together with the nonexpansiveness of $\Psi$ yields the third inequality.
\end{enumerate}
\endproof
We now prove that $(a)$ implies $(b)$.
\\

$\boldsymbol{(a) \Rightarrow (b)}$
\\

Assume $(a)$. Let $(\lambda,\lambda') \in (0,1]^2$. We have
\begin{eqnarray*}
\left\|v_{\lambda}-v_{\lambda'}\right\|&=&\left\|\lambda \Psi((1-\lambda) \lambda^{-1} v_{\lambda})-\lambda' \Psi((1-\lambda') {\lambda'}^{-1} v_{\lambda'})\right\|
\\
&\leq& C\left|\lambda-\lambda'\right|+\left\|\lambda' \Psi((1-\lambda) {\lambda'}^{-1} v_{\lambda})-\lambda' \Psi((1-\lambda') {\lambda'}^{-1} v_{\lambda'})\right\|
\\
&\leq& (C+\left\|v_{\lambda}\right\|) \left|\lambda-\lambda'\right|+(1-\lambda') \left\|v_{\lambda}-v_{\lambda'}\right\|.
\end{eqnarray*}
In the first inequality, we applied Assumption 1 to $f=(1-\lambda)v_{\lambda}$, and in the second inequality, we applied twice the nonexpansiveness of $\Psi$.
We deduce the existence of $A>0$ such that for all $(\lambda,\lambda') \in (0,1]^2$, $\left\|v_{\lambda}-v_{\lambda'} \right\| \leq A \left|\lambda-\lambda'\right| {\lambda'}^{-1}$. Consequently, in order to prove $(b)$, it is sufficient to prove that $(v_{n^{-1}})_{n \geq 1}$ converges when $n$ goes to infinity.

By $(a)$, there exists $v^* \in X$ such that $(v_n)_{n \geq 1}$ converges to $v^*$.
Let $\epsilon \in (0,1/4)$. Let $N_0 \in \m{N}^*$ such that for all $n \geq N_0$,
\begin{equation} \label{limvn}
\left\|v_n-v^* \right\| \leq \epsilon^2/2.
\end{equation}
 Let $n \geq \epsilon^{-2}N_0$, $\lambda:=n^{-1}$, and $t:=\lfloor \epsilon n \rfloor$, where $\lfloor x \rfloor$ denotes the integer part of $x$. Equations (\ref{recop}) and (\ref{recopn}) yield

\begin{equation} \label{fix1}
v_{\lambda}=\Psi^{t}_{\lambda}(v_{\lambda}),
\end{equation}
and
\begin{equation} \label{fix2}
v_{n}=n^{-1} \Psi^{t}((n-t)v_{n-t}).
\end{equation}
We have 
\begin{eqnarray*}
\left\|v_{\lambda}-v_{n} \right\| &\leq& \left\|v_{\lambda}-\Psi^{t}_{\lambda}(v_{n-t}) \right\|+
\left\|\Psi^{t}_{\lambda}(v_{n-t})-v_{n} \right\|.
\end{eqnarray*}
Applying first (\ref{fix1}) and Lemma \ref{iterate} \ref{it1}, then (\ref{limvn}), we obtain
\begin{eqnarray*}
\left\|v_{\lambda}-\Psi^{t}_{\lambda}(v_{n-t}) \right\| &\leq& (1-\lambda)^t \left\|v_{\lambda}-v_{n-t} \right\|  
\\
&\leq& (1-\lambda)^t \left\|v_{\lambda}-v_{n}\right\| + (1-\lambda)^t \left\|v_{n}-v_{n-t} \right\|
\\
&\leq& (1-\lambda)^t \left\|v_{\lambda}-v_{n}\right\| + \epsilon^2.
\end{eqnarray*}
Let $\displaystyle M:=C+\sup_{n \in \m{N}} \left\|v_n\right\|$.
Equality (\ref{fix2}) and Lemma \ref{iterate} \ref{it2} yield
\begin{eqnarray*}
\left\|\Psi^{t}_{\lambda}(v_{n-t})-v_{n} \right\| &\leq& (C+\left\|v_{n-t} \right\|)\left[t n^{-1}-1+(1-n^{-1})^t\right]
\\
&\leq& M \left(\epsilon-1+e^{-\epsilon+\epsilon^2}\right).
\end{eqnarray*}
The last two inequalities yield
\begin{eqnarray*}
\left\|v_{\lambda}-v_{n} \right\| &\leq& (1-\lambda)^{\epsilon n-1} \left\|v_{\lambda}-v_{n} \right\|+\epsilon^2+M \left(\epsilon-1+e^{-\epsilon+\epsilon^2}\right)
\\
&\leq& e^{-\epsilon+\epsilon^{2}} \left\|v_{\lambda}-v_{n} \right\| +\epsilon^2+M \left(\epsilon-1+e^{-\epsilon+\epsilon^2}\right).
\end{eqnarray*}
We deduce that
\begin{equation*}
\left\|v_{\lambda}-v_{n} \right\| \leq \left[\epsilon^2+M \left(\epsilon-1+e^{-\epsilon+\epsilon^2}\right) \right]\left(1-e^{-\epsilon+\epsilon^{2}}\right)^{-1}.
\end{equation*}
The right-hand side goes to 0 when $\epsilon$ goes to 0, thus $(b)$ holds.
 \\

$\boldsymbol{(b) \Rightarrow (a)}$
\\

Assume $(b)$.
There exists $\beta \in (0,1)$ such that for all $\lambda \in (0,\beta]$, we have
\begin{equation} \label{limvl}
\left\|v_{\lambda}-v^* \right\| \leq \epsilon^2/2.
\end{equation}
Let $\epsilon_0 \in (0,1)$ such that for all $\epsilon \leq \epsilon_0$, $e^{-\epsilon} \leq 1-\epsilon+\epsilon^2$. Fix $\epsilon \in (0,\epsilon_0/2]$, and define
$r_0:=\lfloor \epsilon^{-3/2} \rfloor$. Let $N \geq 1$ such that $\lfloor (1-\epsilon)^{r_0-1}N \rfloor \geq (\beta \epsilon)^{-1}$. 
Let $n \geq N$. For $r \in \m{N}^*$, define $n_r:=\lfloor(1-\epsilon)^{r-1} n \rfloor$ and $\lambda_r:=1/n_r$. The following assertions hold:
\begin{lemma} \label{l2} \mbox{}
\begin{enumerate}[(i)]
\item
$\forall \ r \in \left\{1,...,r_0 \right\} \ \lambda_r \leq \beta \epsilon$
\item
$\forall \ r \in \left\{1,...,r_0-1 \right\} \ \left(1-1/{n_r} \right)^{n_r-n_{r+1}}-n_{r+1}/{n_r} \leq 4 \epsilon^2$
\item \label{vl}
$\forall \ r \in \left\{1,...,r_0-1 \right\} \ \left\|v_{\lambda_r}-v_{\lambda_{r+1}} \right\| \leq \epsilon^2$
\end{enumerate}
\end{lemma}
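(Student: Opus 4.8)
The plan is to prove the three assertions essentially separately, noting that (iii) is an immediate consequence of (i) together with hypothesis (b), whereas (ii) is a self-contained elementary estimate that carries the real content of the lemma.

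For (i) I would exploit that $r \mapsto (1-\epsilon)^{r-1}$ is nonincreasing and that $n \ge N$. For every $r \in \{1,\dots,r_0\}$ this gives $(1-\epsilon)^{r-1} n \ge (1-\epsilon)^{r_0-1} N$, and since $x \mapsto \lfloor x\rfloor$ is nondecreasing, $n_r = \lfloor (1-\epsilon)^{r-1} n\rfloor \ge \lfloor (1-\epsilon)^{r_0-1} N\rfloor \ge (\beta\epsilon)^{-1}$ by the defining property of $N$. Taking reciprocals yields $\lambda_r = n_r^{-1} \le \beta\epsilon$, which is (i).

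Assertion (iii) would then follow at once: for $r \in \{1,\dots,r_0-1\}$ both $\lambda_r$ and $\lambda_{r+1}$ are at most $\beta\epsilon \le \beta$ by (i), so (\ref{limvl}) applies to each, giving $\|v_{\lambda_r}-v^*\| \le \epsilon^2/2$ and $\|v_{\lambda_{r+1}}-v^*\| \le \epsilon^2/2$; the triangle inequality then gives $\|v_{\lambda_r}-v_{\lambda_{r+1}}\| \le \epsilon^2$.

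The heart of the lemma, and the step I expect to be the main obstacle, is (ii). I would set $x := (n_r-n_{r+1})/n_r = 1 - n_{r+1}/n_r$ and first bound, via $1-u \le e^{-u}$, the power $(1-n_r^{-1})^{n_r-n_{r+1}} \le e^{-x}$, so that the quantity in (ii) is at most $e^{-x}-(1-x)$. The defining inequality $e^{-s} \le 1-s+s^2$ (valid for $s \le \epsilon_0$) then bounds this by $x^2$, once one checks $x \le \epsilon_0$. Thus everything reduces to showing $x \le 2\epsilon$, which simultaneously yields $x^2 \le 4\epsilon^2$ and $x \le 2\epsilon \le \epsilon_0$. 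To obtain $x \le 2\epsilon$ I would apply the floor inequalities $a-1 < \lfloor a\rfloor \le a$ to $n_r$ and $n_{r+1}$: writing $a = (1-\epsilon)^{r-1} n$, one finds $n_r - n_{r+1} \le \epsilon a + 1$ and $n_r > a-1$, whence $x \le \epsilon + (1+\epsilon)/(a-1)$. The floor correction $(1+\epsilon)/(a-1)$ is then controlled using $a \ge n_r \ge (\beta\epsilon)^{-1}$ from (i): it is of order $\beta\epsilon$, hence at most $\epsilon$ (which may be assumed, since decreasing $\beta$ preserves (\ref{limvl})). The genuine difficulty is this uniform bookkeeping of the integer-part errors over all $r \le r_0-1$ and all $n \ge N$: although $r_0 = \lfloor \epsilon^{-3/2}\rfloor$ is large and $(1-\epsilon)^{r_0-1}$ can be tiny, the choice of $N$ is precisely what keeps $n_{r_0}$, and therefore every $n_r$ occurring here, above $(\beta\epsilon)^{-1}$, so that these corrections remain negligible relative to $\epsilon$.
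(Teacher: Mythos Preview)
Your proof is correct and follows essentially the same approach as the paper: (i) by monotonicity and the choice of $N$, (iii) by the triangle inequality from (\ref{limvl}), and (ii) via $(1-1/n_r)^{n_r-n_{r+1}}\le e^{-x}\le 1-x+x^2$ with $x=1-n_{r+1}/n_r$. The only difference is that in (ii) the paper obtains the slightly cleaner bound $x\le \epsilon+1/n_r$ directly from $n_{r+1}\ge (1-\epsilon)n_r-1$, and then $1/n_r=\lambda_r\le\beta\epsilon<\epsilon$ gives $x\le 2\epsilon$ without any need to shrink $\beta$; your detour through $a=(1-\epsilon)^{r-1}n$ is what creates the extra $(1+\epsilon)/(a-1)$ term.
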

\proof{Proof} \mbox{}
\begin{enumerate}[(i)]
\item \label{inw2}
Let $r \in \left\{1,...,r_0 \right\}$. We have $\lfloor (1-\epsilon)^{r-1} n \rfloor \geq (\beta \epsilon)^{-1}$, thus $\lambda_r \leq \beta \epsilon$.
\\
\item
Let $r \in \left\{1,...,r_0-1 \right\}$. We have
\begin{eqnarray*}
(1-1/{n_r})^{n_r-n_{r+1}} &\leq& e^{-(n_r-n_{r+1})/{n_r}}
\\
&\leq& 1-(n_r-n_{r+1})/{n_r}+\left[(n_r-n_{r+1})/{n_r}\right]^2
\\
&\leq& n_{r+1}/n_r+(1-n_{r+1}/n_r)^2
\\
&\leq& n_{r+1}/n_r+(\epsilon+1/n_r)^2
\\
&\leq& n_{r+1}/n_r+4\epsilon^2.
\end{eqnarray*}
\item
It is a direct consequence of (\ref{limvl}) and \ref{inw2}.
\end{enumerate}
\endproof
\vspace{0.25cm}
Let $r \in \left\{1,...,r_0-1 \right\}$.
 Equations (\ref{recop}) and (\ref{recopn}) yield
\begin{equation} \label{fix3}
v_{n_r}=(n_r)^{-1}\Psi^{n_r-n_{r+1}}(n_{r+1}v_{n_{r+1}})
\end{equation}
and
\begin{equation} \label{fix4}
v_{\lambda_r}=\Psi^{n_r-n_{r+1}}_{\lambda_r}(v_{\lambda_r}).
\end{equation}
We have
\begin{equation*}
\left\|v_{n_r}-v_{\lambda_r} \right\| \leq \left\|v_{n_r}-(n_r)^{-1} \Psi^{n_r-n_{r+1}}(n_ {r+1}v_{\lambda_r}) \right\|+
\left\|(n_r)^{-1} \Psi^{n_r-n_{r+1}}(n_{r+1}v_{\lambda_r})-v_{\lambda_r} \right\|.
\end{equation*}
Applying first (\ref{fix3}) and then Lemma \ref{l2} \ref{vl} yields
\begin{eqnarray*}
\left\|v_{n_r}-(n_r)^{-1} \Psi^{n_r-n_{r+1}}(n_ {r+1}v_{\lambda_r}) \right\|&\leq& (n_r)^{-1} n_{r+1} \left\|v_{n_{r+1}}-v_{\lambda_{r}} \right\|
\\
&\leq& (n_r)^{-1} n_{r+1} 
\left(\left\|v_{n_{r+1}}-v_{\lambda_{r+1}} \right\|
+\epsilon^2 \right).
\end{eqnarray*}
Let $M:=C+\sup_{n \in \m{N}^*} \left\|v_n \right\|+\sup_{\lambda \in (0,1]} \left\|v_{\lambda} \right\|+1$. Equality (\ref{fix4}) and Lemma \ref{iterate} \ref{it2} yield
\begin{eqnarray*}
\left\|(n_r)^{-1} \Psi^{n_r-n_{r+1}}(n_{r+1}v_{\lambda_r})-v_{\lambda_r} \right\| &\leq& 
(C+\left\|v_{\lambda_r}\right\|) \left[(n_r-n_{r+1})/n_r-1+(1-1/n_r)^{n_r-n_{r+1}} \right]
\\
&\leq& M \left[(1-1/{n_r})^{n_r-n_{r+1}}-n_{r+1}/{n_r} \right]
\\
&\leq& 4 \epsilon^2 M.
\end{eqnarray*}
We deduce that
\begin{eqnarray*}
n_r \left\|v_{n_r}-v_{\lambda_r} \right\| &\leq&   n_{r+1}\left(\left\|v_{n_{r+1}}-v_{\lambda_{r+1}} \right\|+\epsilon^2\right)+4 \epsilon^2 M n_r
\\
&\leq& n_{r+1}\left\|v_{n_{r+1}}-v_{\lambda_{r+1}} \right\| +5 \epsilon^2 M n_r.
\end{eqnarray*}
Summing from $r=1$ to $r=r_0-1$ yields
\begin{eqnarray*}
\left\|v_{n}-v_{n^{-1}} \right\| &\leq& n_{r_0} n^{-1} \left\|v_{n_{r_0}}-v_{\lambda_{r_0}} \right\|+5 \epsilon^2 M r_0
\\
&\leq& 2(1-\epsilon)^{\lfloor\epsilon^{-3/2} \rfloor-1}M+5 \epsilon^{1/2}M.
\end{eqnarray*}
The right-hand side goes to 0 as $\epsilon$ goes to 0, thus $(a)$ holds.

Note that the proof of the two implications show that when $(a)$ and $(b)$ hold, we have $\lim_{n \rightarrow +\infty} v_n=\lim_{\lambda \rightarrow 0} v_{\lambda}$.
\section{Applications to zero-sum stochastic games} \label{app}
In this section, we apply Theorem \ref{taubgen} to zero-sum stochastic games.
\subsection{Dynamic programming}
A dynamic programming problem is described by a state space $K$, a nonvoid correspondence $F :K \rightrightarrows K$, and a bounded payoff function $g:K \rightarrow \m{R}$. 

The problem proceeds as follows. Given an initial state $k_1 \in K$, at each stage, the decision-maker chooses $k_{m+1} \in F(k_m)$, and gets the stage payoff $g(k_m)$. For $\lambda \in (0,1]$ (resp. $n \in \m{N}^*$) in the $\lambda$-discounted problem (resp. $n$-stage problem), the decision-maker maximizes the total payoff $\sum_{m \geq 1} \lambda(1-\lambda)^{m-1} g(k_m)$ (resp. $n^{-1} \sum_{m=1}^n g(k_m)$). 

A \textit{strategy} for the decision-maker assigns a decision $k_{m+1} \in F(k_m)$ to each finite history $(k_1,k_2,...,k_m)$. We denote respectively $v_{\lambda}(k_1)$ and $v_n(k_1)$ the value of the $\lambda$-discounted problem and $n$-stage problem: $v_{\lambda}(k_1):=\sup_{s \in S} \sum_{m \geq 1} \lambda(1-\lambda)^{m-1} g(k_m)$ and $v_n(k_1):=\sup_{s \in S} n^{-1} \sum_{m=1}^n g(k_m)$, where $S$ is the set of strategies. 

Let $X$ be the set of bounded real-valued functions defined on $K$, equipped with the uniform norm. For $(f,k) \in X \times K$, let 
\begin{equation*}
\Psi(f)(k):=g(k)+ \sup_{k' \in F(k)} f(k').
\end{equation*}
$X$ is a Banach space and $\Psi$ is a nonexpansive operator which satisfies Assumption 1.
Standard dynamic programming gives (see Lehrer and Sorin \cite{LS92}):
\begin{equation*}
v_{\lambda}(k)=\lambda g(k) + (1-\lambda) \sup_{k' \in F(k)} v_{\lambda}(k')=\left[\lambda \Psi((1-\lambda) \lambda^{-1} v_{\lambda})\right](k)
\end{equation*}
and
\begin{equation*}
v_{n}(k)=n^{-1} g(k) + (1-n^{-1}) \sup_{k' \in F(k)} v_{n-1}(k')=\left[n^{-1} \Psi((n-1)v_{n-1}) \right](k).
\end{equation*}
Applying Theorem \ref{taubgen}, we recover the Tauberian theorem proved in Lehrer and Sorin \cite{LS92}: $(v_n)$ converges uniformly on $K$ if and only if $(v_{\lambda})$ converges uniformly on $K$, and when uniform convergence holds, the two limits coincide.
\subsection{Zero-sum stochastic games}
If $\left(C,\mathcal{C}\right)$ is a Borel subset of a Polish space, we denote by $\Delta(C)$ the set of probability measures on $C$, equipped with the weak$^*$ topology.

We use the same framework as in Maitra and Parthasarathy \cite{MP70}. We consider a general model of zero-sum stochastic game, described by a state space $K$ which is a Borel subset of a Polish space, two action sets $I$ and $J$, which are Borel subsets of a Polish space, a Borel measurable transition function $q: K \times I \times J \rightarrow \Delta(K)$, and a bounded Borel measurable payoff function $g:K \times I \times J \rightarrow \m{R}$.

The initial state is $k_1 \in K$, and the stochastic game $\Gamma(k_1)$ which starts in $k_1$ proceeds as follows. At each stage $m \geq 1$, both players choose simultaneously and independently an action, $i_m \in I$ (resp. $j_m \in J$) for Player 1 (resp. 2). The payoff at stage $m$ is $g_m:=g(k_m,i_m,j_m)$.
The state $k_{m+1}$ of stage $m+1$ is drawn from the probability distribution $q(k_m,i_m,j_m)$. Then $(k_{m+1},i_m,j_m)$ is publicly announced to both players.
\\
The set of all possible histories before stage $m$ is
$H_m:=(K \times I \times J)^{m-1} \times K$. A \textit{behavioral strategy} for Player 1 (resp. 2) is a Borel measurable mapping $\displaystyle \sigma:\cup_{m \geq 1} H_m \rightarrow \Delta(I)$ (resp. $\displaystyle \tau:\cup_{m \geq 1} H_m \rightarrow \Delta(J)$).
\\
A triple $(k_1,\sigma,\tau) \in K \times \Sigma \times \mathcal{T}$ induces a probability measure on $H_\infty:=(K \times I  \times J)^{\m{N}^*}$, denoted by $\mathbb{P}^{k_1}_{\sigma,\tau}$. Let $\lambda \in (0,1]$. The $\lambda$-discounted game $\Gamma_{\lambda}(k_1)$ is the game defined by its normal form $(\Sigma,\mathcal{T},\gamma_{\lambda}^{k_1})$, where 
\begin{equation*}
\gamma^{k_1}{\lambda}(\sigma,\tau):=\mathbb{E}^{k_1}_{\sigma,\tau}\left(\sum_{m \geq 1} \lambda (1-\lambda)^{m-1} g_m \right).
\end{equation*}
Let $n \in \m{N}^*$. The $n$-stage game $\Gamma_{n}(k_1)$ is the game defined by its normal form $(\Sigma,\mathcal{T},\gamma_{n}^{k_1})$, where 
\begin{equation*}
\gamma^{k_1}_{n}(\sigma,\tau):=\mathbb{E}^{k_1}_{\sigma,\tau}\left(\frac{1}{n}\sum_{m=1}^n g_m \right).
\end{equation*}
Let $f: K \rightarrow \m{R}$ be a bounded Borel measurable function, and $(k_1 ,x,y) \in K \times \Delta(I) \times \Delta(J)$. Define 
\begin{equation*} \label{deftrans}
\m{E}^{k}_{x,y}(f):=\int_{(k',i,j) \in K \times I \times J} f(k') dq(k,i,j)(k')dx(i) dy(j)
\end{equation*}
and 
\begin{equation*} \label{defpayoff}
\displaystyle g(k,x,y):=\int_{(i,j) \in I \times J} g(k,i,j) dx(i) dy(j) .
\end{equation*}
We make the following assumption:
\begin{assumption}
For all $k_1 \in K$, $\lambda \in (0,1]$ and $n \in \m{N}^*$, the games $\Gamma_{\lambda}(k_1)$ and $\Gamma_{n}(k_1)$ have a value, that is, there exists real numbers $v_{\lambda}(k_1)$ and $v_n(k_1)$ such that:
\begin{equation*}
v_{\lambda}(k_1)=\sup_{\sigma \in \Sigma} \inf_{\tau \in \mathcal{T}} \gamma^{k_1}_{\lambda}(\sigma,\tau)
=\inf_{\tau \in \mathcal{T}} \sup_{\sigma \in \Sigma} \gamma^{k_1}_{\lambda}(\sigma,\tau),
\end{equation*}
and
\begin{equation*}
v_{n}(k_1)=\sup_{\sigma \in \Sigma} \inf_{\tau \in \mathcal{T}} \gamma^{k_1}_{n}(\sigma,\tau)
=\inf_{\tau \in \mathcal{T}} \sup_{\sigma \in \Sigma} \gamma^{k_1}_{n}(\sigma,\tau).
\end{equation*}
Moreover, $v_{\lambda}$ and $v_{n}$ are Borel measurable, and satisfy the following Shapley equations:
\begin{eqnarray*} 
v_{\lambda}(k_1)&=& \sup_{x \in \Delta(I)} \inf_{y \in \Delta(J)}
\left\{ \lambda g(k_1,x,y)+(1-\lambda)\m{E}^{k_1}_{x,y}(v_{\lambda}) \right\}
\\
&=& \inf_{y \in \Delta(J)} \sup_{x \in \Delta(I)} 
\left\{ \lambda g(k_1,x,y)+(1-\lambda)\m{E}^{k_1}_{x,y}(v_{\lambda}) \right\}
\end{eqnarray*}
and
\begin{eqnarray*} \label{dyn11}
v_{n}(k_1)&=& \sup_{x \in \Delta(I)} \inf_{y \in \Delta(J)}
\left\{ n^{-1} g(k_1,x,y)+(1-n^{-1})\m{E}^{k_1}_{x,y}(v_{n-1}) \right\}
\\
&=& \inf_{y \in \Delta(J)} \sup_{x \in \Delta(I)} 
\left\{ n^{-1} g(k_1,x,y)+(1-n^{-1})\m{E}^{k_1}_{x,y}(v_{n-1}) \right\}.
\end{eqnarray*}
\end{assumption}
Let $X$ be the set of bounded Borel measurable functions from $K$ to $\m{R}$, equipped with the uniform norm, and for all $(f,k) \in X \times K$, we define $\Psi(f)(k):=\sup_{x \in \Delta(I)} \inf_{y \in \Delta(J)} \left\{g(k,x,y)+\m{E}^k_{x,y}(f) \right\}$. We make the following assumption:
\begin{assumption}
For all $f \in X$, $\Psi(f)$ is Borel measurable.
\end{assumption}
\begin{remark} \label{compactcase}
When $K$, $I$ and $J$ are compact metric spaces and $q$ and $g$ are jointly continuous, Assumptions 2 and 3 hold. Maitra and Parthasarathy \cite{MP70} and Nowak \cite{N85} provided weaker conditions under which Assumptions 2 and 3 hold.
\end{remark}
Theorem \ref{taubgen} yields the following Tauberian theorem for stochastic games:
\begin{theorem} \label{th2}
Under Assumptions 2 and 3, the two following statements are equivalent:
\begin{enumerate}[(a)]
\item The family of functions $(v_n)_{n \geq 1}$ converges uniformly on $K$ when $n$ goes to infinity.
\item The family of functions $(v_{\lambda})_{\lambda \in (0,1]}$ converges uniformly on $K$ when $\lambda$ goes to 0.
\end{enumerate}
Moreover, when these statements hold, we have $\lim_{n \rightarrow+\infty} v_n=\lim_{\lambda \rightarrow 0} v_{\lambda}$.
\end{theorem}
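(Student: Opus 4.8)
The plan is to realize Theorem \ref{th2} as a direct instance of the abstract result, Theorem \ref{taubgen}, applied to the Shapley operator of the game. First I would fix the Banach space $X$ of bounded Borel measurable functions from $K$ to $\m{R}$ under the uniform norm, and consider $\Psi(f)(k):=\sup_{x \in \Delta(I)} \inf_{y \in \Delta(J)} \left\{g(k,x,y)+\m{E}^k_{x,y}(f)\right\}$. Assumption 3 guarantees $\Psi(X) \subseteq X$, so $\Psi$ is a genuine self-map of $X$. The crucial observation is that $\Psi$ belongs to the class of operators described in Remark \ref{example}: take $S=\Delta(I)$, $T=\Delta(J)$, and $P_{k,x,y}(f):=\m{E}^k_{x,y}(f)$, which is a linear form of norm at most $1$ since it integrates $f$ against a probability measure. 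Hence $\Psi$ is nonexpansive and satisfies Assumption 1, with constant $C=\sup_{k,i,j}|g(k,i,j)|$. Concretely, $\lambda \Psi(\lambda^{-1}f)(k)=\sup_x \inf_y \left\{\lambda g(k,x,y)+\m{E}^k_{x,y}(f)\right\}$ by linearity of $\m{E}^k_{x,y}$, and since the integrands of the two sup-inf expressions for $\lambda$ and $\lambda'$ differ only by the term $(\lambda-\lambda')g$, nonexpansiveness of the sup-inf operation yields the bound $C|\lambda-\lambda'|$.

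The second step is to identify the game-theoretic values with the abstract objects of Section 1. For the discounted values, the Shapley equation in Assumption 2 reads exactly $v_{\lambda}=\lambda \Psi((1-\lambda)\lambda^{-1} v_{\lambda})$, which is equation (\ref{recop}); since the map $v \mapsto \lambda \Psi((1-\lambda)\lambda^{-1}v)$ is a $(1-\lambda)$-contraction (Lemma \ref{iterate}(\ref{it1}) with $t=1$), its fixed point is unique, so the game value $v_{\lambda}$ coincides with the $v_{\lambda}$ produced by the fixed-point argument in (\ref{recop}). For the finite-horizon values, the Shapley equation gives $v_{n}=n^{-1}\Psi((n-1)v_{n-1})$ with the convention $v_0=0$; a straightforward induction, with base case $v_1=\Psi(0)$, then shows $v_n=n^{-1}\Psi^n(0)$, which is precisely definition (\ref{recopn}). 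Thus both families agree with those studied in Theorem \ref{taubgen}.

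Finally, I would invoke Theorem \ref{taubgen}. The only remaining point is that convergence in the Banach space $(X,\left\|\cdot\right\|)$ is, by definition of the uniform norm, the same as uniform convergence on $K$; hence statements (a) and (b) of Theorem \ref{th2} are literally statements (a) and (b) of Theorem \ref{taubgen} for this $\Psi$, and the equality of limits transfers verbatim.

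I do not expect a serious obstacle, since the argument is an application of the already-proved abstract theorem. The points demanding the most care are bookkeeping rather than conceptual: checking that $\Psi$ really maps $X$ into $X$ and that $v_{\lambda},v_n \in X$ (which is exactly the role of the measurability conditions in Assumptions 2 and 3), verifying the norm-$1$ bound on $P_{k,x,y}$ so that Remark \ref{example} applies, and pinning down the convention $v_0=0$ that makes the induction $v_n=n^{-1}\Psi^n(0)$ start correctly.
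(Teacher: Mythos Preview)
Your proposal is correct and follows essentially the same approach as the paper: set up the Shapley operator $\Psi$ on the Banach space of bounded Borel functions, use Assumption~3 to ensure $\Psi:X\to X$, check nonexpansiveness and Assumption~1 (via the class in Remark~\ref{example}), identify the game values with the abstract $v_\lambda$ and $v_n$ of (\ref{recop})--(\ref{recopn}) through Assumption~2, and then invoke Theorem~\ref{taubgen}. Your write-up simply spells out more of the routine verifications (uniqueness of the fixed point, the induction $v_n=n^{-1}\Psi^n(0)$, the explicit constant $C$) that the paper leaves implicit.
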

\proof{Proof}
$X$ is a Banach space, and Assumption 3 ensures that $\Psi$ is well defined from $X$ to $X$. Moreover, $\Psi$ is a nonexpansive operator which satisfies Assumption 1. Thus Theorem \ref{taubgen} applies to $\Psi$. By Assumption 2, the families of values $(v_{\lambda})$ and $(v_n)$ satisfy equations (\ref{recop}) and (\ref{recopn}), and the result is proved.
\endproof
\subsection{Stochastic games with signals}
Assume $K$, $I$ and $J$ to be finite. The previous model can be generalized in the following way. Let $A$ (resp. $B$) be a finite set of signals for Player 1 (resp. 2). Instead of observing the past actions $(i_m,j_m)$ and the future state $k_{m+1}$ at the end of each stage $m$, Player 1 (resp. 2) gets a private signal $a_m$ (resp. $b_m$), which is correlated to $(k_m,i_m,j_m)$ (see Mertens, Sorin and Zamir \cite[Chapter IV]{MSZ} for more details). This defines a \textit{stochastic game with signals}, denoted by $\Gamma$. A behavioral strategy for a player assigns a mixed action to each of his private history (that is, all the actions he has played and all the signals he has received before the current stage). Because $K$, $I$, $J$, $A$ and $B$ are finite, the $\lambda$-discounted game and the $n$-stage game have a value for all $\lambda \in (0,1]$ and $n \in \m{N}^*$.

By Mertens, Sorin and Zamir \cite[Chapter III]{MSZ}, there exists a stochastic game $\widetilde{\Gamma}$ (in the sense of the previous subsection) with perfect observation of the state and actions, which is \textit{equivalent} to $\Gamma$: it has the same $\lambda$-discounted and $n$-stage values. The state space of this auxiliary stochastic game is the set of infinite higher-order beliefs of the players about the state: this is the \textit{universal belief space}, denoted by $\m{B}$. The set $\m{B}$ is compact metric, and Assumptions 2 and 3 are satisfied. Thus Theorem \ref{th2} applies to the auxiliary stochastic game $\widetilde{\Gamma}$.
\begin{proposition} \label{th3}
The two following statements are equivalent:
\begin{enumerate}[(a)]
\item The family of functions $(v_n)_{n \geq 1}$ converges uniformly on $\m{B}$ when $n$ goes to infinity.
\item The family of functions $(v_{\lambda})_{\lambda \in (0,1]}$ converges uniformly on $\m{B}$ when $\lambda$ goes to 0.
\end{enumerate}
Moreover, when these statements hold, we have $\lim_{n \rightarrow+\infty} v_n=\lim_{\lambda \rightarrow 0} v_{\lambda}$.
\end{proposition}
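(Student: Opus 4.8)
The plan is to deduce the statement directly from Theorem \ref{th2} by passing through the auxiliary game $\widetilde{\Gamma}$. First I would recall the construction of Mertens, Sorin and Zamir \cite[Chapter III]{MSZ}: to the signalling game $\Gamma$ one associates a stochastic game $\widetilde{\Gamma}$ (in the sense of the previous subsection) with perfect observation of the state and actions, whose state space is the universal belief space $\m{B}$, and which is \emph{equivalent} to $\Gamma$, meaning that for every $\lambda \in (0,1]$ and every $n \in \m{N}^*$ the two games share the same $\lambda$-discounted value $v_{\lambda}$ and the same $n$-stage value $v_n$. Hence the families $(v_n)$ and $(v_{\lambda})$ appearing in the statement are precisely the value families of $\widetilde{\Gamma}$, and it is enough to establish the equivalence for $\widetilde{\Gamma}$.

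Next I would verify that $\widetilde{\Gamma}$ lies within the scope of Theorem \ref{th2}. Since $K$, $I$, $J$, $A$ and $B$ are finite, the universal belief space $\m{B}$ is compact metric; the players' action sets are inherited from $\Gamma$ (hence finite, and in particular compact metric), and the transition and payoff data induced on $\m{B}$ are continuous. By Remark \ref{compactcase}, Assumptions 2 and 3 therefore hold for $\widetilde{\Gamma}$, so Theorem \ref{th2} is applicable to it.

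Finally, Theorem \ref{th2} applied to $\widetilde{\Gamma}$ yields at once the equivalence of (a) and (b) for uniform convergence on $\m{B}$, together with the equality $\lim_{n \to +\infty} v_n = \lim_{\lambda \to 0} v_{\lambda}$ whenever these statements hold; this is exactly the assertion of the proposition. The only step requiring genuine care is the verification that $\widetilde{\Gamma}$ satisfies Assumptions 2 and 3, since the remainder is a transparent transfer of Theorem \ref{th2} through the equivalence; this regularity is furnished by the compactness and continuity properties of the universal belief space recalled above, so no additional argument is needed.
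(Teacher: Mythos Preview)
Your proposal is correct and follows essentially the same route as the paper: the paper simply notes that the auxiliary game $\widetilde{\Gamma}$ on the universal belief space $\m{B}$ has the same values as $\Gamma$, that $\m{B}$ is compact metric with Assumptions~2 and~3 satisfied, and then applies Theorem~\ref{th2}. Your write-up merely makes the verification of Assumptions~2 and~3 slightly more explicit (via Remark~\ref{compactcase}), but the argument is the same.
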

\begin{remark}
It is not known in general if the families $(v_n)_{n \geq 1}$ and $(v_{\lambda})_{\lambda \in (0,1]}$ are equicontinuous. Thus uniform convergence may be difficult to prove, even when pointwise convergence holds. In the examples of the next section, $(v_n)_{n \geq 1}$ and $(v_{\lambda})_{\lambda \in (0,1]}$ are equi-Lipschitz, thus pointwise convergence and uniform convergence are equivalent in these examples.
\end{remark}

\section{Examples of zero-sum stochastic games} \label{ex}
We apply the results of the previous section to several standard examples of zero-sum stochastic games.
\subsection{Stochastic games with compact action sets and finite state space}
We consider the case where the state space is finite, the action sets are compact, and the transition and payoff functions are separately continuous. By the standard minmax theorem, Assumptions 2 and 3 hold (it is a particular case of Maitra and Parthasarathy \cite{MP70}). Because $K$ is finite, uniform convergence and pointwise convergence with respect to the state variable are equivalent. Theorem \ref{th2} yields the following proposition:
\begin{proposition} \label{compactSG}
In a stochastic game with finite state space and compact action sets, the two following statements are equivalent:
\begin{enumerate}[(a)]
\item For all $k_1 \in K$, $(v_n(k_1))$ converges when $n$ goes to infinity.
\item For all $k_1 \in K$, $(v_{\lambda}(k_1))$ converges when $\lambda$ goes to 0.
\end{enumerate}
Moreover, when these statements hold, we have $\lim_{n \rightarrow+\infty} v_n(k_1)=\lim_{\lambda \rightarrow 0} v_{\lambda}(k_1)$ for all $k_1 \in K$.
\end{proposition}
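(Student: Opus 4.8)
The plan is to obtain Proposition \ref{compactSG} as a direct specialization of Theorem \ref{th2}. The whole task reduces to two routine checks: that the finite-state, compact-action framework satisfies the hypotheses of that theorem, and that in this setting pointwise convergence over $K$ is the same as uniform convergence.

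First I would verify Assumptions 2 and 3. For each fixed $k$ and each bounded $f$, the mixed payoff $(x,y) \mapsto g(k,x,y)+\m{E}^k_{x,y}(f)$ is bilinear on $\Delta(I) \times \Delta(J)$, hence concave in $x$ and convex in $y$; together with the continuity coming from separate continuity of $g(k,\cdot,\cdot)$ and $q(k,\cdot,\cdot)$ on the compact action sets, Sion's minmax theorem delivers the value and the saddle-point (sup-inf $=$ inf-sup) Shapley equations for each $\Gamma_\lambda(k_1)$ and $\Gamma_n(k_1)$. This is exactly the particular case of Maitra and Parthasarathy \cite{MP70} cited in the text, so Assumption 2 holds. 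Assumption 3, together with the Borel measurability requirements in Assumption 2, is immediate since $K$ is finite: every real-valued function on a finite set is Borel.

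Next I would invoke Theorem \ref{th2} verbatim. Under Assumptions 2 and 3 it gives that $(v_n)_{n \geq 1}$ converges uniformly on $K$ if and only if $(v_\lambda)_{\lambda \in (0,1]}$ converges uniformly on $K$, with equal limits. The bridge to the pointwise statements (a) and (b) of the proposition is the finiteness of $K$: a family of functions on a finite set converges uniformly precisely when it converges at each of the finitely many states. Thus ``for all $k_1 \in K$, $(v_n(k_1))$ converges'' is literally uniform convergence of $(v_n)$ on $K$, and similarly for $(v_\lambda)$. Substituting these equivalences into Theorem \ref{th2} yields both the equivalence of (a) and (b) and the equality $\lim_{n} v_n(k_1) = \lim_{\lambda} v_\lambda(k_1)$ for every $k_1$.

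I do not expect a genuine obstacle, since all the analytic content already resides in Theorem \ref{th2}; the proof is purely a matter of checking hypotheses. The only point deserving slight care is that the Shapley equations in Assumption 2 are required in the saddle-point form, which is exactly what the minmax theorem supplies here from concavity-convexity in the mixed strategies and compactness of $\Delta(I)$, $\Delta(J)$.
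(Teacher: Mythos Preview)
Your proposal is correct and follows essentially the same route as the paper: verify Assumptions~2 and~3 via the standard minmax theorem (the paper likewise cites this as a particular case of Maitra and Parthasarathy \cite{MP70}), observe that finiteness of $K$ makes pointwise and uniform convergence equivalent, and then read off the conclusion from Theorem~\ref{th2}. There is nothing to add.
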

\subsection{Hidden stochastic games} \label{HSG}
Consider the following example of stochastic game with signals. Assume that $K$, $I$ and $J$ are finite, and that players do not observe the current state at each stage (they observe past actions). Instead, they receive a public signal about it, lying in some finite set $A$ (see Renault and Ziliotto \cite{RZ14} for more details). In this particular case, the universal belief space is $\m{B}=\Delta(K)$: this corresponds to the common belief of the players about the state (see Ziliotto \cite{Z13}). Thus, $(v_{\lambda})$ and $(v_n)$ can be considered as families of maps from $\Delta(K)$ to $\m{R}$. They are both equi-Lipschitz, thus for theses families, pointwise convergence and uniform convergence are equivalent. By Proposition \ref{th3}, the following result holds.
\begin{proposition} \label{HSG}
In a hidden stochastic game, the two following statements are equivalent:
\begin{enumerate}[(a)]
\item For all $p_1 \in \Delta(K)$, $(v_n(p_1))$ converges when $n$ goes to infinity.
\item For all $p_1 \in \Delta(K)$, $(v_{\lambda}(p_1))$ converges when $\lambda$ goes to 0.
\end{enumerate}
Moreover, when these statements hold, we have $\lim_{n \rightarrow+\infty} v_n(p_1)=\lim_{\lambda \rightarrow 0} v_{\lambda}(p_1)$ for all $p_1 \in \Delta(K)$.
\end{proposition}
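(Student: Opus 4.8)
The plan is to derive this statement from Proposition~\ref{th3}, whose conclusion is phrased in terms of \emph{uniform} convergence on the universal belief space $\m{B}$, whereas here convergence is only asked \emph{pointwise} at each $p_1 \in \Delta(K)$. The entire content of the proof is therefore to bridge these two modes of convergence, i.e.\ to show that for the families $(v_n)$ and $(v_\lambda)$ pointwise and uniform convergence coincide. Two features of the hidden case, both recalled in the paragraph preceding the statement, make this possible: first, by Ziliotto~\cite{Z13} the universal belief space reduces to the common belief of the players about the state, so that $\m{B}=\Delta(K)$; second, by Renault and Ziliotto~\cite{RZ14} the maps $p \mapsto v_n(p)$ and $p \mapsto v_\lambda(p)$ are equi-Lipschitz on $\Delta(K)$.

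First I would record that $\Delta(K)$ is a compact metric space, since $K$ is finite. Next I would establish, or simply invoke, a common Lipschitz constant $L$ valid for every $v_n$ and every $v_\lambda$; this is the point requiring the most care, and I expect it to be the main obstacle. The natural route is to use the Shapley equations of Assumption~2 together with the fact that, in the belief formulation, both the one-stage payoff $g(p,\cdot,\cdot)$ and the belief transition are Lipschitz (indeed affine) in $p$ uniformly in the actions, with constants controlled by $\|g\|_\infty$; feeding this into the recursive characterizations of $v_n$ and $v_\lambda$ and using the nonexpansiveness of the associated operator yields a bound on the Lipschitz constant independent of $n$ and of $\lambda$. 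Alternatively one cites \cite{RZ14}, where this equi-Lipschitz property is proved directly.

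Granting a uniform Lipschitz constant $L$, the equivalence of pointwise and uniform convergence is a standard compactness argument which I would carry out as follows. Fix $\epsilon>0$ and cover the compact space $\Delta(K)$ by finitely many balls of radius $\epsilon/(3L)$ centred at points $p^{(1)},\dots,p^{(m)}$. Assuming pointwise convergence of $(v_n)$ to a limit $v^*$, which is then itself $L$-Lipschitz, choose $N$ so that $|v_n(p^{(i)})-v^*(p^{(i)})|\leq \epsilon/3$ for every $i$ and every $n\geq N$; then for an arbitrary $p$, picking the nearest centre $p^{(i)}$ and using the $L$-Lipschitz bounds on $v_n$ and on $v^*$ gives $|v_n(p)-v^*(p)|\leq \epsilon$, so the convergence is uniform. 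The same argument applies verbatim to $(v_\lambda)$ as $\lambda\to 0$. Consequently statement $(a)$ (resp.\ $(b)$) of this proposition is equivalent to statement $(a)$ (resp.\ $(b)$) of Proposition~\ref{th3}, and the proposition follows, including the equality of the two limits, which is inherited directly from Proposition~\ref{th3}.
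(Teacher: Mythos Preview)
Your proposal is correct and follows exactly the approach the paper takes: identify $\m{B}=\Delta(K)$ in the hidden case, invoke the equi-Lipschitz property of the families $(v_n)$ and $(v_\lambda)$ so that pointwise and uniform convergence on the compact set $\Delta(K)$ coincide, and then apply Proposition~\ref{th3}. The paper's own argument is the single paragraph preceding the statement and contains precisely these ingredients; you have simply fleshed out the compactness step and the source of the Lipschitz bound in more detail than the paper does.
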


\subsection{Markov chain games with incomplete information on both sides} \label{MC}
Consider the following example of stochastic game with signals. Assume that $K$, $I$ and $J$ are finite, and that the state space is a product $K=C \times D$, such that the two components of the state follow independent Markov chains. Players know the transition and the initial distribution of each Markov chain, but only Player 1 (resp. 2) observes the realization at stage 1 of the first (resp. second) component. From stage 2, they do not observe the state. They observe past actions (see Gensbittel and Renault \cite{GR12} for more details). In this particular case, the equivalent stochastic game with perfect observation of the state has state space $\Delta(C) \times \Delta(D)$, that is, the product of the set of possible beliefs of Player 2 about the initial state of the first Markov chain, and of the set of possible beliefs of Player 1 about the initial state of the second Markov chain. Thus, $(v_{\lambda})$ and $(v_n)$ can be considered as families of maps from $\Delta(C) \times \Delta(D)$ to $\m{R}$. They are both equi-Lipschitz, thus for these families, pointwise convergence and uniform convergence are equivalent. Gensbittel and Renault \cite{GR12} proved that $(v_n)$ converges, and asked whether $(v_{\lambda})$ converges. By Remark \ref{compactcase}, Assumptions 2 and 3 hold, and from Theorem \ref{th2} we deduce the following result:
\begin{proposition}
In a Markov chain game with incomplete information on both sides, for all $p_1 \in \Delta(C) \times \Delta(D)$, $(v_n(p_1))$ and $(v_{\lambda}(p_1))$ converge to the same limit.
\end{proposition}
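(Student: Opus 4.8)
The plan is to obtain this proposition as a direct application of Theorem \ref{th2}, feeding it two inputs: the convergence of $(v_n)$ established by Gensbittel and Renault \cite{GR12}, and the equi-Lipschitz property of the two families. The key observation is that, once the Markov chain game is transported to its equivalent stochastic game with perfect observation of the state (whose state space is the compact metric space $\Delta(C) \times \Delta(D)$), every hypothesis required to invoke the Tauberian theorem of the previous section is either already known or follows from standard regularity.

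First I would verify that Assumptions 2 and 3 hold for the equivalent game. Since the state space $\Delta(C) \times \Delta(D)$ is compact metric and the action sets are finite, Remark \ref{compactcase} applies and guarantees both assumptions. Next I would record that, because $(v_n)_{n \geq 1}$ and $(v_{\lambda})_{\lambda \in (0,1]}$ are equi-Lipschitz on the compact set $\Delta(C) \times \Delta(D)$, pointwise convergence of either family is equivalent to its uniform convergence: an equi-Lipschitz, uniformly bounded family is equicontinuous, so the equivalence follows from a standard Arzel\`a-Ascoli (or $\epsilon/3$) argument on a finite net of the state space.

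With these preliminaries in place, the argument closes in two moves. By \cite{GR12}, $(v_n(p_1))$ converges for every $p_1 \in \Delta(C) \times \Delta(D)$; hence, by the equivalence just recorded, $(v_n)$ converges uniformly on $\Delta(C) \times \Delta(D)$, which is statement $(a)$ of Theorem \ref{th2}. Applying Theorem \ref{th2} (equivalently, Proposition \ref{th3}) then yields statement $(b)$, namely that $(v_{\lambda})$ converges uniformly, together with the equality of the two limits. Restricting to a fixed $p_1$ gives exactly the claimed conclusion.

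I do not expect a genuine obstacle here, since the two substantive ingredients---the convergence of $(v_n)$ and the Tauberian bridge between the $n$-stage and discounted values---are imported wholesale. The only step requiring any care is the passage from pointwise to uniform convergence, which rests entirely on the equi-Lipschitz property; were that property to fail, uniform convergence (and hence the applicability of Theorem \ref{th2}) could not be concluded, and the result would be out of reach by this method.
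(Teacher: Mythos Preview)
Your proposal is correct and follows essentially the same route as the paper: verify Assumptions 2 and 3 via Remark \ref{compactcase}, use the equi-Lipschitz property to upgrade the pointwise convergence of $(v_n)$ from \cite{GR12} to uniform convergence, and then invoke Theorem \ref{th2}. The paper compresses this into a single sentence before the proposition, but the ingredients and their order are identical to yours.
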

\section{An example}
In this section, we prove the following theorem:
\begin{theorem}
There exists a hidden stochastic game such that for some initial state $k_1 \in K$ known by both players, $(v_{\lambda}(k_1))$ and $(v_n(k_1))$ converge to distinct limits.
\end{theorem}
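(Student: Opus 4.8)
The plan is to construct an explicit finite hidden stochastic game, exhibit a commonly known initial state $k_1$ (a Dirac belief in $\Delta(K)$), and show directly that both $(v_\lambda(k_1))$ and $(v_n(k_1))$ converge but to distinct limits. The guiding constraint comes from Proposition \ref{HSG} itself: since $(v_n)$ and $(v_\lambda)$ are equi-Lipschitz on $\Delta(K)$, pointwise and uniform convergence coincide, so distinct limits at $k_1$ are possible only if at least one of the two families \emph{fails to converge} at some other belief $p_0 \in \Delta(K)$. Thus I do not aim for a game in which both families converge everywhere; on the contrary, the construction must contain an oscillating core that destroys uniform convergence, while the special state $k_1$ is arranged so that both families are nonetheless convergent there, with different limits.

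The building block is an oscillating hidden stochastic game in the spirit of Ziliotto \cite{Z13}: a finite game with public signals on the state and perfect monitoring of actions in which, at some belief, $v_\lambda$ oscillates (as a function of $\log(1/\lambda)$) and $v_n$ oscillates (as a function of $\log n$) without converging. I would first record the precise quantitative asymptotics of this core --- in particular the shape of the oscillation profile and the fact that the $n$-stage and $\lambda$-discounted evaluations read this profile against, respectively, a uniform (Ces\`aro) kernel and an exponential (Abel) kernel.

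The key device is then an initial \emph{randomization of the phase}: from $k_1$ the play enters the oscillating core at an effectively randomized entry time, so that the payoff accumulated from $k_1$ is an average of the core's behaviour over all phases of its oscillation. Averaging over the phase smooths the oscillation and restores convergence of both families at $k_1$; crucially, because the uniform and exponential kernels weight the phases differently, the two limits are two distinct constants $L_C = \lim_n v_n(k_1)$ and $L_A = \lim_\lambda v_\lambda(k_1)$ with $L_C \neq L_A$. It is essential here that optimal behaviour is genuinely horizon- and discount-dependent: there is no single fixed payoff stream whose Ces\`aro and Abel means we are comparing --- which the classical Abelian theorem would forbid --- but rather the values of two distinctly evaluated optimization problems, and it is exactly this adaptivity that the construction exploits.

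Finally I would verify that the construction is a bona fide hidden stochastic game --- finite $K$, $I$, $J$ and signal sets, public signals on the state, perfect observation of actions, and $k_1$ a commonly known deterministic state --- so that values exist, the Shapley equations hold, and the equi-Lipschitz property applies, making the example fully consistent with Proposition \ref{HSG}. The main obstacle is the quantitative heart of the third step: simultaneously proving that \emph{both} $(v_\lambda(k_1))$ and $(v_n(k_1))$ converge (that is, controlling the remainder terms in the two recursions so that the phase-averaging indeed kills the oscillation) and that the two resulting limits are \emph{strictly} different. This requires a careful asymptotic analysis of the discounted and finite-horizon Shapley recursions propagated through the phase-randomizing transition, and is where the bulk of the computation lies.
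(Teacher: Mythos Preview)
Your proposal has a genuine conceptual gap in the ``phase randomization'' step. In the $\lambda$-discounted game, the phase of the oscillation of $v_\lambda(\cdot)$ at the core is a function of $\lambda$ alone; delaying entry cannot average over it. Concretely, if from your new state $\omega$ the play moves to the core state $k_1$ with probability $p$ and stays in $\omega$ (with stage payoff $g_0$) with probability $1-p$, then the Shapley equation gives
\[
v_\lambda(\omega)=\frac{\lambda g_0+(1-\lambda)p\,v_\lambda(k_1)}{1-(1-\lambda)(1-p)},
\]
so $v_\lambda(\omega)-v_\lambda(k_1)\to 0$ as $\lambda\to 0$. Thus $v_\lambda(\omega)$ inherits exactly the oscillation of $v_\lambda(k_1)$; a randomized entry time averages nothing in the discounted evaluation. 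Your heuristic that ``Ces\`aro and Abel kernels weight phases differently'' presupposes that both evaluations are integrating the same phase variable, but in the discounted game there is no phase variable being integrated at all: for fixed $\lambda$ the core contributes the single number $v_\lambda(k_1)$. So the plan, as stated, cannot make $(v_\lambda)$ converge at the new state.

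The paper's mechanism is not randomization but \emph{optionality}, exploited asymmetrically in the two evaluations. First, a waiting state $\omega_1$ controlled by Player~1 (payoff $1/2$, with an action to jump into the core) lets Player~1 in $\Gamma_n$ \emph{choose} the residual horizon, hence the favourable phase, yielding $\liminf_n v_n(\omega_1)>1/2$. Second, a companion core is built whose discounted oscillation is shifted by half a period relative to the first; a state $\omega_3$ controlled by Player~2 lets him choose, for each $\lambda$, whichever core is currently unfavourable to Player~1, so that $v_\lambda(\omega_3)\to 1/2$ while still $\liminf_n v_n(\omega_3)>1/2$. Finally, an extra state where Player~2 chooses between $\omega_3$ and an absorbing payoff $x\in(1/2,\liminf_n v_n(\omega_3))$ turns the liminf into an actual limit: $v_\lambda\to 1/2$ and $v_n\to x$. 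The point you should take away is that the separation of limits is produced by horizon-adaptive \emph{choices} of the players (different optimal actions in $\Gamma_\lambda$ and $\Gamma_n$), not by an exogenous random delay; your sketch names adaptivity but then replaces it with passive randomization, which is why the argument stalls on the discounted side.
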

\begin{remark}
As proved in Ziliotto \cite[Section 4, p. 21]{Z13}, this hidden stochastic game can be adapted, in order to get an example of a stochastic game with compact action sets and finite state space, such that for some initial state $k_1 \in K$, $(v_{\lambda}(k_1))$ and $(v_n(k_1))$ converge to distinct limits. It is also possible to build an example of a hidden stochastic game such that for some initial state $k_1 \in K$ known by both players, $(v_{\lambda}(k_1))$ converges but $(v_n(k_1))$ does not, and conversely, an example where $(v_n(k_1))$ converges but $(v_{\lambda}(k_1))$ does not.
\end{remark}
Before going to the proof, we provide some piece of intuition. In Ziliotto \cite{Z13}, a hidden stochastic game $\Gamma$ is constructed, in which neither $(v_{\lambda}(k_1))$ nor $(v_n(k_1))$ converges, where $k_1$ is an initial state known by both players. In the discounted game, there exists optimal stationary strategies (that is, strategies which only depend on the common belief about the current state). In this example, a stationary strategy for Player 1 (resp. 2) is equivalent to the choice of an integer $a \in r \m{N}$ (resp. $b \in 2r \m{N}$). Apart from the fact that Player 2's set of stationary strategies is smaller, the game is symmetric. In $\Gamma_{\lambda}(k_1)$, the optimal choice for both players is to choose $m$ as close as possible to $-\ln(\sqrt{2 \lambda})/\ln(2)$. For some  $\lambda$, the closest integer lies in $r(2\m{N}+1)$, and Player 1 has an advantage, whereas for some other discount factors, it lies in $2 r \m{N}$, and Player 1 has no advantage. This is why $(v_{\lambda}(k_1))$ oscillates (between $1/2$ and some $l \in (1/2,1]$). In $\Gamma_n(k_1)$, there may not exist optimal stationary strategies. Depending on the stage of the game $m \in \left\{1,...n \right\}$, the optimal integer for Player 1 lies in $2 r\m{N}$, or in $r(2\m{N}+1)$. Thus, according to the stage of the game, Player 1 may or may not have an advantage. This is in sharp contrast to the discounted game $\Gamma_{\lambda}(k_1)$, in which either Player 1 always has an advantage at any stage of the game, or it never has one. That is why we believe that in this example, $\liminf_{n \rightarrow+\infty} v_{n}(k_1)>1/2$ (but we were not able to prove it). Nonetheless, one can construct a hidden stochastic game $\Gamma^1$, very similar to $\Gamma$, in which $\liminf_{n \rightarrow+\infty} v^1_{n}(k_1)>1/2$ (this corresponds to Step \ref{G1} of the proof). In Step \ref{G2}, we construct a hidden stochastic game $\Gamma^2$, which only difference with $\Gamma^1$ is that Player 2's set of stationary strategies is equivalent to $r(2\m{N}+1)$, instead of $2r \m{N}$. In $\Gamma^2$, we also have $\liminf_{n \rightarrow+\infty} v^2_{n}(k_2)>1/2$, where $k_2$ is an initial state known by both players. In Step \ref{G3}, we define the hidden stochastic game $\Gamma^3$, in which starting from an initial state $\omega_3$, Player 2 chooses between playing $\Gamma^1(k_1)$ or $\Gamma^2(k_2)$. In $\Gamma^3_{\lambda}(\omega_3)$, Player 1 does not have an advantage, because the optimal integer is the same at any stage of the game (either it always lies in $2r \m{N}$, or it always lies in $r(2 \m{N}+1)$). Thus, $\lim_{\lambda \rightarrow 0} v^3_{\lambda}(\omega_3)=1/2$, but $\liminf_{n \rightarrow+\infty} v^3_{n}(\omega_3)>1/2$. It is then straightforward to construct in Step \ref{G4} a final example $\Gamma^4$ which proves the theorem.
\\
\begin{step} \label{G1} \ The game $\Gamma^1$.
\\

In Ziliotto \cite[Section 3, p.12-19]{Z13}, for some $r \geq 2$, a hidden stochastic game $\Gamma$ with $3r+4$ states, two signals $\left\{D,D'\right\}$ and two actions $\left\{C,Q\right\}$ for each player is constructed, such that for some initial state $k_1=1^{++} \in K$ known by both players, and for all $\lambda \in (0,1]$, the following properties hold:
\\
\\
\textbf{Properties of $\Gamma_{\lambda}(k_1)$}
\begin{enumerate}[(1)]
\item \label{eqgame}
The game $\Gamma_{\lambda}(k_1)$ (that is, the $\lambda$-discounted game starting from state $k_1$, and players know the initial state) has the same value as the one-shot game $G_{\lambda}$, with action set $r \m{N}$ for Player 1, $2 r \m{N}$ for Player 2, and payoff function
\begin{equation} \label{oneshot}
g_{\lambda}(a,b):=\frac{1-f_{\lambda}(b)}{1-f_{\lambda}(a)f_{\lambda}(b)},
\end{equation}
where 
\begin{equation*}
f_{\lambda}(n):=\frac{(1-2^{-n})(1-\lambda^2)}{1+2^{n+1}\lambda(1-\lambda)^{-n}-\lambda} \in [0,1).
\end{equation*}
\\
\item \label{P1win}
$v_{\lambda}(k_1) \geq 1/2$
\\
\item \label{vn1}
For $m \in \m{N}^*$, define $n_m:=2^{4rm+2r+1}$. Then
\begin{equation*}
\liminf_{m \rightarrow +\infty} v_{n_m}(k_1) \geq 3/4.
\end{equation*}
\item \label{draw}
Consider the one-shot game with action set $r \m{N}$ for Player 1 and 2, and payoff function $g_{\lambda}$. The value of this game converges to $1/2$ when $\lambda$ goes to 0.
\end{enumerate}
\vspace{0.25cm}
Property (\ref{eqgame}) corresponds to \cite[Proposition 3.3, p. 15]{Z13}. Property (\ref{vn1}) follows from the proof of \cite[Theorem 3.6, p. 18-19]{Z13}. In $G_{\lambda}$, a dominant strategy for Player 1 (resp. 2) is to maximize $f_{\lambda}$ over $r \m{N}$ (resp. $2 r \m{N}$). Consequently, if the action set of Player 2 is changed into $r \m{N}$, the value of this new game is equal to $\left[1+\max_{a \in r\m{N}} f_{\lambda}(a)\right]^{-1}$. This quantity is greater than $1/2$, thus Property (\ref{P1win}) holds. By \cite[Lemma 2.4, p. 10]{Z13}, this quantity goes to $1/2$ as $\lambda$ goes to 0, thus Property (\ref{draw}) holds.

Define another hidden stochastic game $\Gamma^1$, by adding a new state $\omega_1$ to $\Gamma$ (the action and signal sets are unchanged). The game is described by the following figure:
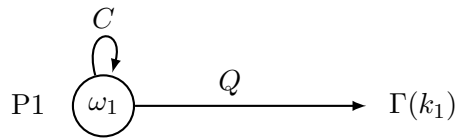
\begin{figure}[H]
\centering
   \caption{The game $\Gamma^1$}
   \vspace{0.3cm}
\begin{tikzpicture}[>=stealth',shorten >=1pt,auto,node distance=4cm,thick,main
 node/.style={circle,draw,font=\Large\bfseries}]

\node [draw,text width=0.4cm,text centered,circle] (O) at (0,0) {$\omega_1$};

\node [text width=0.4cm,text centered,circle] (A) at (4,0) {$\Gamma(k_1)$};

\draw[->,>=latex] (O) to[] node[midway, above left] {$Q$}(A);
\draw[->,>=latex] (O) edge[loop above] node[midway, above] {$C$}(O);

\draw(O)+(-1,-0.25) node[above]{P1};
\end{tikzpicture}
\end{figure}
The payoff in state $\omega_1$ is $1/2$, and Player 1 controls $\omega_1$  (that is, the transition in $\omega_1$ is independent of Player 2's actions). When Player 1 plays action $C$ in state $\omega_1$, the game remains in state $\omega_1$ with probability 1, and when Player 1 plays action $Q$, the game goes to state $k_1$ with probability 1, and the game $\Gamma(k_1)$ is played.

We have the following proposition:
\begin{proposition} \label{propG1} \mbox{}
\begin{enumerate}[(i)]
\item \label{propG1l}
The game $\Gamma^1_{\lambda}(\omega_1)$ has the same value as the one-shot game $G^1_{\lambda}$, with action set $r \m{N}$ for Player 1, $2 r \m{N}$ for Player 2, and payoff function $g^1_{\lambda}(a,b):=\lambda/2+(1-\lambda) g_{\lambda}(a,b)$.
\item \label{propG1n}
The sequence of values $(v^1_n)$ of $\Gamma^1_n$ satisfies
\begin{equation*}
\liminf_{n \rightarrow +\infty} v^1_n(\omega_1)>1/2.
\end{equation*}
\end{enumerate}
\end{proposition}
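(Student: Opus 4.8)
For part (\ref{propG1l}), the plan is to exploit that the added state $\omega_1$ is trivial: Player~1 controls its transition, Player~2's action is irrelevant there, and the stage payoff is the constant $1/2$. Since the discounted game of a finite hidden stochastic game has a value obeying the Shapley equation, I would write that equation at $\omega_1$. Writing $w:=v^1_\lambda(\omega_1)$ and noting that the continuation value after quitting is $v_\lambda(k_1)$ (the subgame started at $k_1$ in $\Gamma^1$ is exactly $\Gamma(k_1)$, as $\omega_1$ is not reachable from $k_1$, so $v^1_\lambda(k_1)=v_\lambda(k_1)$), the equation reads
\[
w = \max_{p \in [0,1]} \Big[ \frac{\lambda}{2} + (1-\lambda)\big( (1-p)\,w + p\, v_\lambda(k_1) \big) \Big],
\]
where $p$ is the weight Player~1 puts on $Q$. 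The bracket is affine in $p$ with slope $(1-\lambda)(v_\lambda(k_1)-w)$; using Property~(\ref{P1win}), namely $v_\lambda(k_1)\ge 1/2$, the quitting choice $p=1$ is optimal and yields $w=\lambda/2+(1-\lambda)\,v_\lambda(k_1)$ (the waiting choice would force $w=1/2$, which agrees only in the boundary case $v_\lambda(k_1)=1/2$). Finally, by Property~(\ref{eqgame}) we have $v_\lambda(k_1)=\val(G_\lambda)$, and since $x\mapsto \lambda/2+(1-\lambda)x$ is increasing and affine, $\val(G^1_\lambda)=\lambda/2+(1-\lambda)\val(G_\lambda)=w$, which is exactly the claim.

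For part (\ref{propG1n}) only a lower bound on $v^1_n(\omega_1)$ is needed, so it suffices to exhibit one strategy for Player~1. The idea is to \emph{wait, then jump at a favourable horizon}: Player~1 stays in $\omega_1$ (action $C$), collecting $1/2$ per stage, and quits (action $Q$) precisely when exactly $n_m$ stages remain, then plays an optimal strategy of $\Gamma_{n_m}(k_1)$ in the resulting block. Concretely, given $n$, let $m=m(n)$ be the largest integer with $n_m\le n-1$; Player~1 plays $C$ at stages $1,\dots,n-n_m-1$ and $Q$ at stage $n-n_m$, so that the first $n-n_m$ stages lie in $\omega_1$ and the last $n_m$ stages constitute $\Gamma_{n_m}(k_1)$. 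As Player~1 controls the deterministic transition $\omega_1\to k_1$, it knows when the block starts and can guarantee at least $v_{n_m}(k_1)$ there against every behaviour of Player~2. Averaging the two phases gives
\[
v^1_n(\omega_1) \ \ge\ \frac{1}{2} + \frac{n_m}{n}\Big( v_{n_m}(k_1) - \frac{1}{2} \Big).
\]

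It then remains to bound $n_m/n$ away from $0$ and invoke Property~(\ref{vn1}). Since $n_{m+1}/n_m=2^{4r}$ is constant, the choice of $m$ gives $n_{m+1}\ge n$, hence $n_m\ge n\,2^{-4r}$; moreover $m(n)\to\infty$ as $n\to\infty$. Fixing $\epsilon\in(0,1/4)$, Property~(\ref{vn1}) gives $v_{n_m}(k_1)\ge 3/4-\epsilon$ for all large $n$, whence $v^1_n(\omega_1)\ge 1/2+2^{-4r}(1/4-\epsilon)$ for all large $n$, so $\liminf_{n\to\infty} v^1_n(\omega_1)\ge 1/2+2^{-4r}/4>1/2$. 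The main obstacle is not the arithmetic but making the jump step rigorous in the signalling model: one must argue that after Player~1 quits the remaining block genuinely is $\Gamma_{n_m}(k_1)$ and that the guarantee $v_{n_m}(k_1)$ holds irrespective of what Player~2 has (or has not) inferred about the switch. This is legitimate because $v_{n_m}(k_1)$ is a guarantee against \emph{every} opponent strategy and Player~1 controls the timing, but it is the point deserving care; the off-by-one bookkeeping (stage~$1$ is necessarily in $\omega_1$) affects only lower-order terms and not the $\liminf$.
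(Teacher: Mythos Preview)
Your proof is correct and follows essentially the same approach as the paper. For part~(\ref{propG1l}) you write out the Shapley equation at $\omega_1$ and solve it using Property~(\ref{P1win}), whereas the paper simply states that quitting at stage~1 is optimal; for part~(\ref{propG1n}) both arguments use the identical ``wait in $\omega_1$, then jump with horizon $n_m$'' strategy and the geometric ratio $n_{m+1}/n_m=2^{4r}$, the only cosmetic difference being that the paper uses the cruder threshold $v_{n_m}(k_1)\ge 2/3$ (yielding $\liminf\ge 1/2+2^{-4r}/6$) while you keep $3/4-\epsilon$ (yielding $\liminf\ge 1/2+2^{-4r}/4$). Your caveat about the signalling model is harmless but unnecessary here: in a hidden stochastic game past actions are observed, so Player~2 sees exactly when $Q$ is played and both players know the state becomes $k_1$.
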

\begin{proof} \mbox{}
\begin{enumerate}[(i)]
\item
It follows from (\ref{P1win}) that an optimal strategy for Player 1 in $\Gamma^1_{\lambda}(\omega_1)$ is to play $Q$ at stage 1, then an optimal strategy in $\Gamma_{\lambda}(k_1)$, which proves the result.
\\
\item
Let $\epsilon>0$. By (\ref{vn1}), there exists $m_0 \in \m{N}^*$ such that for all $m \geq m_0$, $v_{n_m}(k_1) \geq 2/3$. Let $n \geq n_{m_0}$, and let $m \geq m_0$ such that
$n_m \leq n \leq n_{m+1}$. Define the following strategy for Player 1 in $\Gamma^1_{n}(\omega_1)$: play $C$ (thus stay in $\omega_1$) until stage $n-n_m$, play $Q$ at stage $n-n_m$, then
from stage $n-n_m+1$, play an optimal strategy in $\Gamma_{n_m}(k_1)$. This strategy guarantees
\begin{equation*}
\frac{1}{n}\left[\frac{1}{2}\left(n-n_m\right)+ \frac{2}{3} n_m \right] \geq \frac{1}{2}+\frac{1}{6} \frac{n_m}{n} \geq \frac{1}{2}+\frac{2^{-4r}}{6},
\end{equation*}
thus $\liminf_{n \rightarrow+\infty} v_n(\omega_1)>1/2$.
\end{enumerate}
\end{proof}
\end{step}
\vspace{0.5cm}
\begin{step} \label{G2} \ The game $\Gamma^2$.
\\

One can construct a hidden stochastic game $\Gamma$ similar to the example in \cite[Section 3, p.12-19]{Z13}, with $3r+4$ states, two signals $\left\{D,D'\right\}$ and two actions $\left\{C,Q\right\}$ for each player, such that for some initial state $k_1 \in K$ known by both players, the following properties hold:
\begin{enumerate}[(1)]
\item
The game $\Gamma_{\lambda}(k_1)$ has the same value as the one-shot game $G_{\lambda}$, with action set $r \m{N}$ for Player 1, $r(2\m{N}+1)$ for Player 2, and payoff function $g_{\lambda}$ defined by equation (\ref{oneshot}).
\\
\item 
$v_{\lambda}(k_1) \geq 1/2$
\\
\item
For $m \in \m{N}^*$, define $n_m:=2^{4rm+1}$. Then
\begin{equation*}
\liminf_{m \rightarrow +\infty} v_{n_m}(k_1) \geq 2/3.
\end{equation*}
\end{enumerate}
Using the same construction as in the previous step, adding one more state $\omega_2$ to $\Gamma$, we obtain a hidden stochastic game $\Gamma^2$ which satisfies the equivalent of Proposition \ref{propG1}:
\begin{proposition} \label{propG2} \mbox{}
\begin{enumerate}[(i)]
\item \label{propG2l}
The game $\Gamma^2_{\lambda}(\omega_2)$ has the same value as the one-shot game $G^2_{\lambda}$, with action set $r \m{N}$ for Player 1, $r (2\m{N}+1)$ for Player 2, and payoff function $g^2_{\lambda}(a,b):=\lambda/2+(1-\lambda) g_{\lambda}(a,b)$.
\item \label{propG2n}
The sequence of values $(v^2_n)$ of $\Gamma^2_n$ satisfies
\begin{equation*}
\liminf_{n \rightarrow +\infty} v^2_n(\omega_2)>1/2.
\end{equation*}
\end{enumerate}
\end{proposition}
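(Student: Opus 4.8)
The plan is to reproduce the proof of Proposition \ref{propG1} almost line for line, since $\Gamma^2$ is obtained from the $\Gamma$ of Step \ref{G2} by exactly the same device (adjoining a Player-1-controlled state $\omega_2$ with stage payoff $1/2$, from which action $Q$ sends the play to $\Gamma(k_1)$ and action $C$ keeps it in $\omega_2$) that produced $\Gamma^1$ from the $\Gamma$ of Step \ref{G1}. The only differences are that Player 2's action set is now $r(2\m{N}+1)$, that $n_m:=2^{4rm+1}$, and that the $n$-stage bound is $2/3$ in place of $3/4$; none of these affects the structure of the argument.

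For part \ref{propG2l}, I would write the recursive (Shapley) equation at $\omega_2$. Since Player 1 alone controls $\omega_2$, he chooses between staying (action $C$), for which the only fixed point of the equation $v_\lambda(\omega_2)=\lambda/2+(1-\lambda)v_\lambda(\omega_2)$ is the constant $1/2$, and exiting (action $Q$), which yields $\lambda/2+(1-\lambda)v_\lambda(k_1)$. The second property listed for $\Gamma$ in Step \ref{G2} gives $v_\lambda(k_1)\ge 1/2$, so $\lambda/2+(1-\lambda)v_\lambda(k_1)\ge 1/2$ and exiting is optimal; hence playing $Q$ at stage $1$ and then an optimal strategy of $\Gamma_\lambda(k_1)$ is optimal, and $v_\lambda(\omega_2)=\lambda/2+(1-\lambda)v_\lambda(k_1)$. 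Because the value of a one-shot zero-sum game is equivariant under increasing affine transformations of the payoff, and $\Gamma_\lambda(k_1)$ has the value of the one-shot game with payoff $g_\lambda$ and action sets $r\m{N}$, $r(2\m{N}+1)$ (first property of Step \ref{G2}), this value coincides with the value of $G^2_\lambda$ with payoff $g^2_\lambda=\lambda/2+(1-\lambda)g_\lambda$, which is the claim.

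For part \ref{propG2n}, I would exhibit the same family of Player 1 strategies as in the proof of Proposition \ref{propG1}. By the third property of Step \ref{G2} there is $m_0$ with $v_{n_m}(k_1)\ge 2/3$ for all $m\ge m_0$. Given large $n$, choose $m\ge m_0$ with $n_m\le n\le n_{m+1}$, and let Player 1 play $C$ (staying in $\omega_2$) for the first $n-n_m$ stages, play $Q$ at stage $n-n_m$, and then play optimally in $\Gamma_{n_m}(k_1)$. This guarantees
\[
\frac{1}{n}\left[\frac{1}{2}(n-n_m)+\frac{2}{3}n_m\right]=\frac12+\frac16\,\frac{n_m}{n}\ \ge\ \frac12+\frac{2^{-4r}}{6},
\]
where the last inequality uses $n_m/n\ge n_m/n_{m+1}=2^{-4r}$, itself immediate from $n_m=2^{4rm+1}$. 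Hence $\liminf_{n\to+\infty} v^2_n(\omega_2)\ge \tfrac12+2^{-4r}/6>1/2$.

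I do not expect any genuine obstacle: everything of substance is inherited from the construction of Step \ref{G2} and from the already-proved Proposition \ref{propG1}. The only points requiring (entirely routine) care are to confirm that replacing Player 2's action set by $r(2\m{N}+1)$ leaves the two quoted properties $v_\lambda(k_1)\ge 1/2$ and $\liminf_m v_{n_m}(k_1)\ge 2/3$ intact — these are asserted as part of the construction, so no fresh estimate is needed — and to check the ratio $n_{m+1}/n_m=2^{4r}$, which is trivial.
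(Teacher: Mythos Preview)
Your approach is exactly the one the paper intends: it does not give a separate proof of Proposition~\ref{propG2}, saying only that the construction and argument of Proposition~\ref{propG1} carry over verbatim. Your write-up is a faithful transcription of that proof.

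There is one small technical slip in part~\ref{propG2n}. Property~(3) of Step~\ref{G2} asserts only $\liminf_{m\to\infty} v_{n_m}(k_1)\ge 2/3$, which does \emph{not} yield an $m_0$ with $v_{n_m}(k_1)\ge 2/3$ for $m\ge m_0$ (the limit inferior could sit exactly at $2/3$ while infinitely many terms lie strictly below). In the paper's proof of Proposition~\ref{propG1} this issue did not arise because the corresponding bound there was $3/4$, and the threshold $2/3$ used in the estimate was strictly smaller. Here you should pick any threshold $c\in(1/2,2/3)$, say $c=3/5$; then $\liminf\ge 2/3>c$ gives $v_{n_m}(k_1)\ge c$ eventually, and the same computation yields
\[
\frac{1}{n}\left[\frac12(n-n_m)+c\,n_m\right]=\frac12+\Bigl(c-\frac12\Bigr)\frac{n_m}{n}\ \ge\ \frac12+\Bigl(c-\frac12\Bigr)2^{-4r}>\frac12.
\]
With this adjustment the argument is complete and identical in spirit to the paper's.
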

\end{step}
\vspace{0.5cm}
\begin{step} \label{G3} \ The game $\Gamma^3$.
\\

Denote $K_1$ (resp. $K_2$) the state space of $\Gamma^1$ (resp. $\Gamma^2$), and define the hidden stochastic game with state space $K:=K_1 \cup K_2 \cup \left\{\omega_3 \right\}$, action sets $I:=J:=\left\{C,Q\right\}$, signal set $A:=\left\{D,D'\right\}$. The game is described by the following figure:
\begin{figure}[H]
\centering
   \caption{The game $\Gamma^3$}
   \vspace{0.3cm}
\begin{tikzpicture}[>=stealth',shorten >=1pt,auto,node distance=4cm,thick,main
 node/.style={circle,draw,font=\Large\bfseries}]

\node [draw,text width=0.4cm,text centered,circle] (O) at (0,0) {$\omega_3$};

\node [text width=0.4cm,text centered,circle] (A) at (4,1) {$\Gamma^1(\omega_1)$};
\node [text width=0.4cm,text centered,circle] (B) at (4,-1) {$\Gamma^2(\omega_2)$};

\draw[->,>=latex] (O) to[] node[midway, above left] {$C$}(A);
\draw[->,>=latex] (O) to[] node[midway, below left] {$Q$}(B);

\draw(O)+(-1,-0.25) node[above]{P2};
\end{tikzpicture}
\end{figure}
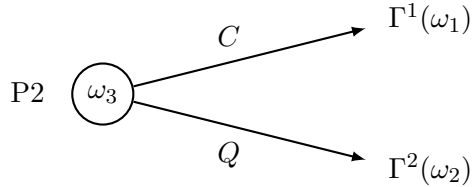
The payoff is $1/2$ in state $\omega_3$, and Player 2 controls this state. If Player 2 plays $C$ in $\omega_3$, in the next stage the game $\Gamma^1(\omega_1)$ is played, and if he plays $Q$, in the next stage the game $\Gamma^2(\omega_2)$ is played.

\begin{proposition} \mbox{}
\begin{enumerate}[(i)]
\item
The game $\Gamma^3_{\lambda}(\omega_3)$ has the same value as the one-shot game $\widetilde{G}_{\lambda}$, with action set $r \m{N}$ for Player 1 and Player 2, and payoff function $\widetilde{g}_{\lambda}:=\lambda(2-\lambda)/2+(1-\lambda)^2 g_{\lambda}$, where $g_{\lambda}$ is described by equation (\ref{oneshot}). In particular, $(v_{\lambda}(\omega_3))$ converges to $1/2$. 
\item
The sequence of values $(v^3_n)$ of $\Gamma^3_n$ satisfies
\begin{equation*}
\liminf_{n \rightarrow+\infty} v^3_n(\omega_3)>1/2.
\end{equation*}
\end{enumerate}
\end{proposition}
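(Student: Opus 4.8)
The plan is to obtain both statements from the one-stage recursion at the controlled state $\omega_3$, evaluating the two continuation games by Propositions \ref{propG1} and \ref{propG2} and identifying the discounted limit through Property (\ref{draw}).

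First I would write the recursion at $\omega_3$. The stage payoff there is $1/2$ regardless of the actions, so Player 1's action is irrelevant; Player 2 controls the (deterministic, publicly observed) transition and is the minimizer; and once play enters $\Gamma^1(\omega_1)$ or $\Gamma^2(\omega_2)$ it never leaves. Since past actions are observed and $\omega_3$ is commonly known, after Player 2's move the initial state of the continuation is common knowledge, so the continuation value is exactly the value of $\Gamma^1(\omega_1)$ or of $\Gamma^2(\omega_2)$ computed in Steps \ref{G1} and \ref{G2}. Hence
\begin{equation*}
v^3_\lambda(\omega_3) = \frac{\lambda}{2} + (1-\lambda)\min\left\{v^1_\lambda(\omega_1),\, v^2_\lambda(\omega_2)\right\},
\end{equation*}
and, from the $n$-stage Shapley equation,
\begin{equation*}
v^3_n(\omega_3) = \frac{1}{2n} + \frac{n-1}{n}\min\left\{v^1_{n-1}(\omega_1),\, v^2_{n-1}(\omega_2)\right\}.
\end{equation*}

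For statement (i), I substitute the values from part (i) of Propositions \ref{propG1} and \ref{propG2}. Factoring the constant $\lambda/2$ out of $g^1_\lambda$ and $g^2_\lambda$ gives $v^1_\lambda(\omega_1) = \lambda/2 + (1-\lambda)\val(g_\lambda;\, r\m{N},\, 2r\m{N})$ and $v^2_\lambda(\omega_2) = \lambda/2 + (1-\lambda)\val(g_\lambda;\, r\m{N},\, r(2\m{N}+1))$. The crux is to evaluate the minimum of these two. Recall from the discussion after Property (\ref{draw}) that in the game with payoff $g_\lambda$ a dominant strategy is to maximize $f_\lambda$ over one's own action set, so each value equals $g_\lambda(a^*,b^*)$ with $a^* = \argmax_{r\m{N}} f_\lambda$ and $b^*$ the maximizer of $f_\lambda$ over Player 2's set; moreover a larger value of $f_\lambda(b)$ yields a smaller payoff (which is precisely why maximizing $f_\lambda$ is Player 2's dominant strategy). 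Consequently taking the minimum over the two subgames amounts to letting Player 2 choose the larger of $\max_{2r\m{N}} f_\lambda$ and $\max_{r(2\m{N}+1)} f_\lambda$; since $r\m{N} = 2r\m{N} \sqcup r(2\m{N}+1)$, this equals $\max_{r\m{N}} f_\lambda$, so
\begin{equation*}
\min\left\{v^1_\lambda(\omega_1),\, v^2_\lambda(\omega_2)\right\} = \frac{\lambda}{2} + (1-\lambda)\val(g_\lambda;\, r\m{N},\, r\m{N}).
\end{equation*}
Substituting into the recursion and collecting terms yields $v^3_\lambda(\omega_3) = \lambda(2-\lambda)/2 + (1-\lambda)^2\,\val(g_\lambda;\, r\m{N},\, r\m{N})$, which is exactly the value of $\widetilde{G}_\lambda$ (the affine change $\widetilde{g}_\lambda = \lambda(2-\lambda)/2 + (1-\lambda)^2 g_\lambda$ has positive slope, hence preserves the dominant strategies and transforms the value affinely). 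Convergence to $1/2$ then follows from Property (\ref{draw}), as $\lambda(2-\lambda)/2 \to 0$, $(1-\lambda)^2 \to 1$ and $\val(g_\lambda;\, r\m{N},\, r\m{N}) \to 1/2$.

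For statement (ii), part (ii) of Propositions \ref{propG1} and \ref{propG2} provide $\delta > 0$ and $N_0$ with $v^1_n(\omega_1) \geq 1/2 + \delta$ and $v^2_n(\omega_2) \geq 1/2 + \delta$ for all $n \geq N_0$. Then for $n-1 \geq N_0$ the minimum in the $n$-stage recursion is at least $1/2 + \delta$ whatever Player 2 does, so
\begin{equation*}
v^3_n(\omega_3) \geq \frac{1}{2n} + \frac{n-1}{n}\left(\frac{1}{2} + \delta\right) = \frac{1}{2} + \frac{n-1}{n}\,\delta,
\end{equation*}
giving $\liminf_{n \to +\infty} v^3_n(\omega_3) \geq 1/2 + \delta > 1/2$. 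The main obstacle is the combination step in (i): collapsing $\min\{v^1_\lambda(\omega_1), v^2_\lambda(\omega_2)\}$ into a single game on $r\m{N} \times r\m{N}$. This is the heart of the construction --- by choosing which subgame to enter at the first stage, Player 2 recovers in the discounted game the full action set $2r\m{N} \sqcup r(2\m{N}+1) = r\m{N}$ against Player 1's fixed optimal response, neutralizing the advantage that Player 1 keeps exploiting stage by stage in the finite-horizon game; everything else is the recursion and the elementary estimate in (ii).
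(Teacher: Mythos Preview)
Your proof is correct and follows the same approach as the paper, which simply cites Propositions \ref{propG1}(i) and \ref{propG2}(i) together with Property (\ref{draw}) for part (i), and Propositions \ref{propG1}(ii) and \ref{propG2}(ii) for part (ii). You have usefully spelled out the step the paper leaves implicit, namely the dominant-strategy argument that collapses $\min\{v^1_\lambda(\omega_1),v^2_\lambda(\omega_2)\}$ into the value of $g_\lambda$ on $r\m{N}\times r\m{N}$ via the partition $r\m{N}=2r\m{N}\sqcup r(2\m{N}+1)$.
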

\begin{proof} \mbox{}
\begin{enumerate}[(i)] 
 \item
The first point follows from Proposition \ref{propG1} \ref{propG1l} and Proposition \ref{propG2} \ref{propG2l}, and the second point is a consequence of Property (\ref{draw}) in Step \ref{G1}.
\\
\item
This is a consequence of Proposition \ref{propG1} \ref{propG1n} and Proposition \ref{propG2} \ref{propG2n}.
\end{enumerate}
\end{proof}
\end{step}
\vspace{0.5cm}
\begin{step} \label{G4} \ Final example and proof of the theorem.
\\

Let $x$ such that $1/2<x<\liminf_{n \rightarrow+\infty} v^3_n(\omega_3)$. Define a hidden stochastic game $\Gamma^4$ by adding two more state $\omega_4$ and $x^*$ to $\Gamma^3$, as described in the following figure:
\begin{figure}[H]
\centering
   \caption{The game $\Gamma^4$}
   \vspace{0.3cm}
\begin{tikzpicture}[>=stealth',shorten >=1pt,auto,node distance=4cm,thick,main
 node/.style={circle,draw,font=\Large\bfseries}]

\node [draw,text width=0.4cm,text centered,circle] (O) at (0,0) {$\omega_4$};

\node [text width=0.4cm,text centered,circle] (A) at (4,1) {$\Gamma^3(\omega_3)$};
\node [draw,text width=0.4cm,text centered,circle] (B) at (4,-1) {$x^*$};

\draw[->,>=latex] (O) to[] node[midway, above left] {$C$}(A);
\draw[->,>=latex] (O) to[] node[midway, below left] {$Q$}(B);

\draw(O)+(-1,-0.25) node[above]{P2};
\end{tikzpicture}
\end{figure}
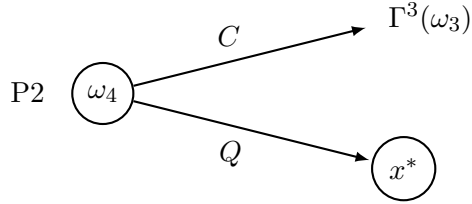
In state $\omega_4$, Player 2 has two options: play $C$ and play the game $\Gamma^3(\omega_3)$ from the next stage, or play $Q$ and get payoff $x$ forever.
Because $\lim_{\lambda \rightarrow 0} v^3_{\lambda}(\omega_3)=1/2<x$, for $\lambda$ small enough, playing action $C$ at stage 1 in $\Gamma^4_{\lambda}(\omega_4)$ is optimal for Player 2. Thus
\begin{equation*}
\lim_{\lambda \rightarrow 0} v^4_{\lambda}(\omega_4)=1/2. 
\end{equation*}
Because $\liminf_{n \rightarrow+\infty} v^3_n(\omega_3)>x$, for $n$ big enough, playing action $Q$ at stage 1 in $\Gamma^4_{n}(\omega_4)$ is optimal for Player 2. Thus
\begin{equation*}
\lim_{n \rightarrow+\infty} v^4_n(\omega_4)=x,
\end{equation*}
\end{step}
and the theorem is proved.

%


%
%
%

\section*{Acknowledgments.}
I am very grateful to my adviser J\'{e}r\^{o}me Renault for helpful discussions. This research was supported by the ANR Jeudy (ANR-10-BLAN 0112) and the GDR 2932.
\bibliography{biblio3}
\end{document}